\numberwithin{equation}{section}
\def\congruent{\equiv}
\def\lcm{{\rm lcm}}
\def\ratQ{\mathbb{Q}}
\def\intZ{\mathbb{Z}}
\def\natN{\mathbb{N}}
\newtheorem{Theorem}{Theorem} 
\newtheorem{Lemma}{Lemma}
\newtheorem{Conjecture}{Conjecture}
\newtheorem{Proposition}{Proposition}
\def\Legendre#1#2{\left( \frac{#1}{#2} \right)} 
\def\rad{{\rm rad}}
\begin{document}

\begin{center}
{\large\bf 
At most one solution to $a^x + b^y = c^z$ for some ranges of $a$, $b$, $c$  
}
  
\bigskip 

Robert Styer

\bigskip

Keywords: {ternary purely exponential Diophantine equation, number of solutions, Je\'smanowicz conjecture}

2020 Subject Class: {11D61}

\end{center}


5 Feb 2024

\bigskip

\begin{abstract}  We consider the number of solutions in positive integers $(x,y,z)$ for the purely exponential Diophantine equation $a^x+b^y =c^z$ (with $\gcd(a,b)=1$).  Apart from a list of known exceptions, a conjecture published in 2016 claims that this equation has at most one solution in positive integers $x$, $y$, and $z$.  We show that this is true for some ranges of $a$, $b$, $c$, for instance, when $1 < a,b < 3600$ and $c<10^{10}$.  The conjecture also holds for small pairs $(a,b)$ independent of $c$, where $2 \le a,b \le 10$ with $\gcd(a,b)=1$.  We show that the Pillai equation $a^x - b^y = r > 0$ has at most one solution (with a known list of exceptions) when $2 \le a,b \le 3600$.  Finally, the primitive case of the Je\'smanowicz conjecture holds when $a \le 10^6$ or when $b \le 10^6$.  This work highlights the power of some ideas of Miyazaki and Pink and the usefulness of a theorem by Scott.  
\end{abstract}

MSC: 11D61

\section{Introduction}

Let $\natN$ be the set of all positive integers. Let $a$, $b$, $c$ be fixed coprime positive integers with $\min(a,b,c)>1$.  We consider the number of solutions $N(a,b,c)$ in positive integers $(x,y,z)$ to the equation
$$a^x + b^y = c^z. \eqno{(1.1)}$$
Mahler \cite{M} used his p-adic analogue of the method of Thue-Siegel to prove that (1.1) has only finitely many solutions $(x, y, z)$. Gelfond \cite{Ge} later made Mahler's result effective. A result of Beukers and Schlickewei \cite{BS} implies the existence of a bound on the number of solutions, independent of $a$, $b$, and $c$. Hirata-Kohno \cite{HK} used \cite{BS} to obtain a bound of $2^{36}$ (reportedly Hirata-Kohno may have later announced a bound of 200, apparently unpublished).
In \cite{ScSt6}, it is shown that if $c$ is odd, then (1.1) has at most two solutions.  Hu and Le \cite{HL2} showed that when $c$ is even and $\max(a,b,c) > 10^{62}$ then (1.1) has at most two solutions.  Miyazaki and Pink \cite{MP} recently proved that (1.1) has at most two solutions in all cases (except the well known instance $(a,b,c)=(3,5,2)$).  

The following conjecture appeared in \cite{ScSt6}:

\begin{Conjecture}  
Let $a$, $b$, $c$ be coprime positive integers greater than one with $a$, $b$, $c$ not perfect powers and $a<b$.  
Then $N(a,b,c) \le 1$, except for 

(i)  $N(2, 2^r-1, 2^r+1)=2$, $(x,y,z)=(1,1,1)$ and $(r+2, 2, 2)$, where $r$ is a positive integer with $r \ge 2$.  

(ii)  $N(2, 3, 11)=2$, $(x,y,z)=(1,2,1)$ and $(3,1,1)$.

(iii)  $N(2, 3, 35)=2$, $(x,y,z)=(3,3,1)$ and $(5,1,1)$.

(iv)  $N(2, 3, 259)=2$, $(x,y,z)=(4,5,1)$ and $(8,1,1)$.

(v)  $N(2, 5, 3)=2$, $(x,y,z)=(1,2,3)$ and $(2,1,2)$.

(vi)  $N(2, 5, 133)=2$, $(x,y,z)=(3,3,1)$ and $(7,1,1)$.

(vii)  $N(2, 7, 3)=2$, $(x,y,z)=(1,1,2)$ and $(5,2,4)$.

(viii)  $N(2, 89, 91)=2$, $(x,y,z)=(1,1,1)$ and $(13,1,2)$.

(ix)  $N(2, 91, 8283)=2$, $(x,y,z)=(1,2,1)$ and $(13,1,1)$.

(x)  $N(3,5,2)=3$, $(x,y,z)=(1,1,3)$, $(1,3,7)$, and $(3,1,5)$.

(xi)  $N(3,10,13)=2$, $(x,y,z)=(1,1,1)$ and $(7,1,3)$.

(xii)  $N(3,13,2)=2$, $(x,y,z)=(1,1,4)$ and $(5,1,8)$.

(xiii)  $N(3, 13, 2200)=2$, $(x,y,z)=(1,3,1)$ and $(7,1,1)$.

\end{Conjecture}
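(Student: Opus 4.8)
The plan is to treat the conjecture as a classification problem rather than a direct nonexistence problem, starting from the strong bound now available. By Miyazaki--Pink \cite{MP}, every eligible triple satisfies $N(a,b,c) \le 2$ except $(3,5,2)$, which is exactly exception (x). Thus it suffices to assume two distinct solutions $(x_1,y_1,z_1) \ne (x_2,y_2,z_2)$ of (1.1) and prove that $(a,b,c)$ together with these solutions appears in items (i)--(ix) or (xi)--(xiii). I would organize the argument first by $\min(z_1,z_2)$ and then by the parity of $c$, since these two features dictate which $p$-adic tools are available. Note that if $c$ is odd then exactly one of $a,b$ is even, while if $c$ is even then $\gcd(a,b)=1$ forces both $a$ and $b$ odd; these parity constraints already align with the list, where the even-$c$ triples are only $(3,5,2)$, $(3,13,2)$, and $(3,13,2200)$.

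First I would dispose of the subcase $z_1 = z_2 = 1$, which accounts for precisely exceptions (ii), (iii), (iv), (vi), (ix), and (xiii) (the listed triples whose two solutions both have $z=1$). Here $a^{x_1}+b^{y_1} = a^{x_2}+b^{y_2} = c$, so taking $x_1 > x_2$ forces $y_2 > y_1$ and yields $a^{x_2}(a^{x_1-x_2}-1) = b^{y_1}(b^{y_2-y_1}-1)$, i.e. two representations of the fixed integer $c$ in the form $a^x+b^y$. Classifying these is a Pillai-type problem: I would cap the number of representations using known at-most-two bounds for $a^x - b^y = r$ (Bennett), and then apply Scott's quadratic-residue criterion (the theorem emphasized in the abstract, as used in \cite{ScSt6}) to force $r$ into the short exceptional list. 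The coprimality relations $a^{x_2} \mid (b^{y_2-y_1}-1)$ and $b^{y_1} \mid (a^{x_1-x_2}-1)$ should then pin down $(a,b,c)$ exactly.

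The remaining subcase has $\max(z_1,z_2) \ge 2$, and I split it on the parity of $c$. If $c$ is odd, one of $a,b$ is even and the $2$-adic valuation of the even side is rigidly controlled; combining the odd-$c$ structure theory of Scott--Styer \cite{ScSt6} with Laurent's sharp lower bounds for linear forms in two logarithms, I would bound the larger exponent and then eliminate the finitely many surviving residue configurations by a modular sieve, landing on exceptions (i), (v), (vii), (viii), and (xi). If $c$ is even then $a,b$ are both odd and the analysis is governed by the $2$-adic identities of Miyazaki--Pink \cite{MP}: matching $v_2(a^{x_1}+b^{y_1})$ against $v_2(a^{x_2}+b^{y_2})$ across the two solutions imposes severe congruences on $a$ and $b$, which I expect to collapse to $a=3$ and to the single configuration of exception (xii), the triple $(3,13,2)$, once the three-solution triple $(3,5,2)$ has been set aside by \cite{MP}.

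The hard part will be uniformity in this last subcase. The bounds coming from linear forms in logarithms, though effective, are astronomically larger than the size of any triple on the list, so one cannot simply enumerate $(a,b,c)$. The real content is to replace those transcendence bounds by elementary obstructions --- Jacobi-symbol constraints, exact $2$-adic valuation identities, and gap principles between the two solutions --- that hold with \emph{no} upper bound on $a$, $b$, or $c$. Manufacturing a single such obstruction that simultaneously covers every residue class of $(a,b,c)$, especially in the even-$c$ regime where the $2$-adic filtration is most delicate, is the step I expect to be decisive; it is also the step least likely to yield to the present machinery without a genuinely new idea, and it is why the conjecture remains open in full generality even though each bounded range succumbs to these methods.
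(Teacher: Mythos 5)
There is a fundamental problem here that precedes any line-by-line critique: the statement you are trying to prove is Conjecture~1 of the paper, and the paper does not prove it --- it is an open conjecture (originating in \cite{ScSt6}) that the paper only \emph{verifies in bounded ranges} (Theorem~1: $a,b<3600$, $c<10^{10}$, etc.), for small pairs $(a,b)$ (Theorem~4), and for the Pillai case with $a,b\le 3600$ (Theorem~5). Your proposal is likewise not a proof, and to your credit your final paragraph concedes exactly this: the ``decisive step'' of replacing transcendence bounds by uniform elementary obstructions is identified but never supplied. A plan whose last step is ``a genuinely new idea is needed here'' is a research outline, not a proof, so the verdict must be: genuine gap --- indeed the gap is the entire content of the conjecture.

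Beyond that global point, two of your intermediate reductions quietly assume statements that are themselves open. First, in the subcase $z_1=z_2$ you propose to ``cap the number of representations using known at-most-two bounds for $a^x-b^y=r$ (Bennett), and then \ldots\ force $r$ into the short exceptional list.'' But forcing the exceptional list is precisely Bennett's Pillai conjecture, which is open; Bennett's theorem gives at most two solutions, not the classification, and the paper's own Theorem~5 obtains the classification only for $a,b\le 3600$ by heavy machine computation (the bootstrapping of Section~5), not by any uniform argument. Second, in the odd-$c$ and even-$c$ subcases with $\max(z_1,z_2)\ge 2$, the tools you invoke (Scott's Theorem~2, the Miyazaki--Pink $2$-adic lemmas, Laurent/Mignotte lower bounds) all yield bounds that grow with $a$, $b$, $c$ or lists of admissible $z$ that still require checking infinitely many triples; the claim that a ``modular sieve'' then lands exactly on exceptions (i), (v), (vii), (viii), (xi) (respectively (xii)) is asserted, not argued, and no sieve uniform in $(a,b,c)$ is exhibited. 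Your decomposition itself --- split on $z_1=z_2$ versus $z_1<z_2$ and on the parity of $c$, use Pillai results for the first case, Scott's theorem for odd $c$, and the $2$-adic Miyazaki--Pink machinery for even $c$ --- is in fact the same architecture the paper uses for its bounded-range theorems, so the outline is sensible; what is missing is everything that would make it unconditional.
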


Le, Scott and the author (\cite{LeSt}, \cite{LSS}) deal with the special case where $a$, $b$, and $c$ are primes.  Miyazaki and Pink \cite{MP2} have shown that there are no other double solutions when $c=2$, 6, or a Fermat prime (with partial progress on many other values of $c$).  In \cite{ScSt6}, straightforward computations show that there are no other solutions in the range $a < 2500$, $b < 10000$ with the restriction $a^x, b^y < 10^{30}$.  Benjamin Matschke \cite{Mat} has impressive calculations on the $abc$ conjecture; he found all 432408 solutions with ${\rm rad}(abc)<10^7$.  From his list one can show that Conjecture 1.1 is true for $(a,b,c)$ with ${\rm rad}(abc)<10^7$.  

Here we establish the following: 

\begin{Theorem}  
For given coprime $a$, $b$, and $c$ which are not perfect powers with 
$$1 < a < 3600, 1 < b < 3600, c < 10^{10}$$
or 
$$1 < a \le 130, 1 < b < 10^5, c < 10^{10},$$
or 
$$1 < a \le 100, 1 < b \le 100, c < 10^{1000}, c \equiv 1 \bmod 2,$$ 
the equation $a^x + b^y = c^z$ has at most one solution $(x,y,z)$ in positive integers except for the cases listed in Conjecture 1.1.  
\end{Theorem}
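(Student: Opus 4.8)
The plan is to lean on the known ``at most two solutions'' theorems and reduce Theorem 1 to the problem of \emph{excluding a second solution}. By Miyazaki and Pink \cite{MP}, equation (1.1) has at most two solutions for every admissible triple except $(a,b,c)=(3,5,2)$, which is exactly exception (x); and when $c$ is odd the same bound is already furnished by \cite{ScSt6}. Hence throughout I may assume $N(a,b,c)\le 2$, so that the whole content of the statement is to show that whenever a second solution exists, the triple $(a,b,c)$ lies in the finite list of Conjecture 1.1.

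So suppose two solutions $(x_1,y_1,z_1)$ and $(x_2,y_2,z_2)$ exist, ordered by $z_1\le z_2$ (the degenerate case $z_1=z_2$ is absorbed by the same divisibility analysis below). The first thing I would record are the elementary constraints they impose. Since $\gcd(a,b)=1$ forces $\gcd(a,c)=\gcd(b,c)=1$, reducing modulo $c^{z_1}$ gives $a^{x_1}\equiv -b^{y_1}\pmod{c^{z_1}}$ and $a^{x_2}\equiv -b^{y_2}\pmod{c^{z_1}}$, while subtracting the two equations yields $c^{z_1}\bigl(c^{z_2-z_1}-1\bigr)=(a^{x_2}-a^{x_1})+(b^{y_2}-b^{y_1})$. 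These relations pin down divisibility links among $a^{x_1}$, $b^{y_1}$ and $c^{z_1}$, and they tie the exponents of the larger solution to the multiplicative orders of $a$ and $b$ modulo $c^{z_1}$.

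The crucial step --- where the ideas of Miyazaki--Pink and the theorem of Scott are decisive --- is to force the \emph{smaller} solution to have tiny exponents. Every double solution in Conjecture 1.1 has $\min(z_1,z_2)\le 2$, and I would aim to prove, from Scott's rigidity theorem for the representation $c^{z}=a^{x}+b^{y}$ together with the $p$-adic control of the high solution in the Miyazaki--Pink argument, that $z_1$ is bounded by a small explicit constant. Combined with the hypothesis that $c$ is bounded, a bounded $z_1$ makes $x_1=O(\log c)$ and $y_1=O(\log c)$ bounded as well, so the small solution ranges over a finite, explicitly enumerable set; each such small solution then \emph{produces} a candidate modulus $c=(a^{x_1}+b^{y_1})^{1/z_1}$, and one never has to enumerate $c$ itself. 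This is precisely why the odd-$c$ range can tolerate $c<10^{1000}$: there the sharper structure of \cite{ScSt6} makes the test for a putative partner solution purely congruential (checking the residues of $x_2,y_2,z_2$ against the orders of $a$ and $b$ modulo small divisors of $c$), so the cost does not grow with the size of $c$; for even $c$ the available control is weaker, which forces the smaller cutoff $c<10^{10}$.

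Finally comes the finite verification. For each admissible $(a,b)$ in the stated ranges I would enumerate all small first solutions, form the candidate $c$, discard any that is not a coprime integer of the right size or is a perfect power, and then decide whether a genuine second solution exists: the at-most-two bound guarantees the search terminates, and the congruence sieve of the previous step certifies nonexistence outside the Conjecture 1.1 list. The main obstacle I anticipate is the uniform bound on $z_1$: turning the Miyazaki--Pink and Scott arguments into an \emph{explicit and small} constant --- rather than a Baker-type linear-forms bound that is far too large to exploit computationally --- is the technical heart of the proof, and it is genuinely harder for even $c$ than for odd $c$, which is the source of the different ranges allowed for $c$ in the three cases of the statement.
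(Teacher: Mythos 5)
Your overall architecture --- bound the exponents of the smaller solution, use them to \emph{compute} the candidate $c=(a^{x_1}+b^{y_1})^{1/z_1}$ rather than enumerate $c$, then search a tight window for a partner solution --- is exactly the paper's plan, and the observation that one never enumerates $c$ itself is the right key insight. But the step you defer (``I would aim to prove\dots that $z_1$ is bounded by a small explicit constant'') is precisely where the proposal has a gap, and the way you propose to fill it is off in two respects. First, for even $c$ the paper's bound on $z_1$ (Lemma 2) is a 2-adic lifting-the-exponent estimate of the form $\beta z_1 - \log(z_1)/\log 2 < \alpha + \frac{1}{\log 2}\log\bigl(\frac{\log^2(c)}{\log(a)\log(b)}\,z_2\bigr)$, which is useless until one already has an absolute bound on $z_2$; that bound comes from Lemma 1, which \emph{is} a Baker-type linear-forms-in-logarithms estimate (Bugeaud, as packaged by Miyazaki and Pink). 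So the ``Baker-type bound far too large to exploit computationally'' that you explicitly want to avoid is in fact an indispensable ingredient: it is small enough to exploit because it bounds $z$ (polylogarithmically in $c$), not $c^z$. Second, for odd $c$ the quantity that gets bounded 2-adically is $x_1$, not $z_1$ (Lemma 3, which requires $a$ even), and the finiteness of the $z$-values comes from a different source entirely: the sharpened Theorem 2 of Scott, which shows $z$ divides $\frac{3^{u+v}}{2}h(-P)\,t_k$, a divisor set depending only on $a$ and $b$ and independent of $c$. That independence from $c$, not a ``purely congruential'' partner test, is what lets the odd-$c$ range stretch to $10^{1000}$; it is also why the class numbers $h(-P)$ for $P<13\cdot 10^6$ had to be precomputed and why the bound $3600\approx\sqrt{13\cdot 10^6}$ appears in the statement.

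Two smaller points. Your opening reduction via the ``at most two solutions'' theorems does no work here: the paper simply assumes two solutions exist and derives explicit bounds, and the search terminates because of those bounds, not because a third solution is excluded. And the case $z_1=z_2$ is not ``absorbed by the same divisibility analysis''; it leads to the Pillai equation $a^x-b^y=r$, which the paper must treat by entirely separate means (Bennett's results plus a bootstrapping argument on multiplicative orders, Section 5), so you cannot wave it away.
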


The main purpose of this paper is to highlight how ideas of Miyazaki and Pink \cite{MP} and an improved version of a theorem of Scott \cite{Sc} can be used to prove results like Theorem 1. In later sections we will apply Scott's improved theorem (Theorem 2 below) to show the conjecture holds for small $(a,b)$ independent of $c$, and to obtain new results on the primitive case of the Je\'smanowicz' conjecture and on the Pillai equation $a^x-b^y=r$.  

We begin with some lemmas. Lemmas 1 and 2 deal with $c$ even. Lemmas 3 and 5 deal with $c$ odd.      

Let $\nu_2(n) = t$ where $2^t \parallel n$.  Let $\log_*(n) = \max( 1, \log(n))$. The first lemma, Lemma 3.3 of \cite{MP} (based on a general bound by Bugeaud \cite{Bu}) provides a bound on $z$.

\begin{Lemma}[Miyazaki and Pink]  
Assume $\max(a,b) \ge 9$, $c$ even.  Put $\alpha = \min( \nu_2(a^2 -1 ) -1, \nu_2(b^2-1)-1)$, $\beta=\nu_2(c)$.  Let $(x,y,z)$ be a solution of (1.1) with $z>1$.  Then 
$$ z < \log(a) \log(b) \max( k_1, k_2 \log_*^2(k_3 \log(c))) \eqno{(1.2})$$
where
$$ (k_1,k_2,k_3) = \left( \frac{1803.3 m_2}{\beta}, \frac{ 23.865 m_2}{\beta}, \frac{ 143.75 (m_2+1)}{\beta} \right)$$
when $\alpha = 2$, and when $\alpha \ge 3$,
$$ (k_1,k_2,k_3) = 
\left( 
\frac{2705 m_3}{\alpha \beta}, 
\frac{ 156.39 m_3 \left( 1 + \frac{\log(v_a)}{v_a-1} \right)^2 }{\alpha^3 \beta},
\frac{ 646.9 (m_3+1)}{\alpha^2 \beta} 
\right).
$$
Here $v_a = 3 \alpha \log(2) - \log( 3 \alpha \log(2))$, $m_2 = 1$ when $\min(a,b) > 7$ and $m_2 = \log(8)/\log(\min(a,b))$ when $\min(a,b) \le 7$, and $m_3 = \alpha \log(2) / \log(2^\alpha -1)$.  

$k_1$, $k_2$, $k_3$ decrease when $\alpha \ge 3$ and $\beta$ increase.  
\end{Lemma}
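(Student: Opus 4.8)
The plan is to reduce the bound on $z$ to an upper estimate for a $2$-adic linear form in two logarithms and then invoke Bugeaud's explicit theorem \cite{Bu}. Since $\gcd(a,b)=1$ and $c$ is even, both $a$ and $b$ are odd, hence $2$-adic units; from $a^x + b^y = c^z$ one gets $\nu_2(a^x + b^y) = \nu_2(c^z) = \beta z$ \emph{exactly}. Rewriting this as $\nu_2\bigl( (-1)\,a^{x} b^{-y} - 1 \bigr) = \beta z$ (legitimate because $b^{-y}$ is a unit), I would view the left-hand side as the $2$-adic valuation of the linear form $\Lambda = (-1)^{1} a^{x} b^{-y} - 1$ in the algebraic numbers $-1$, $a$, $b$, where the root of unity $-1$ contributes nothing to the height. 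This is precisely the shape handled by Bugeaud's $p$-adic lower bounds for linear forms in two logarithms with $p=2$; the hypothesis $z>1$ guarantees that the valuation $\beta z$ is genuinely large, and $\max(a,b)\ge 9$ keeps the form non-degenerate.

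First I would record the role of the parameter $\alpha$. For odd $a$ one has $\nu_2(a^2-1)=\nu_2(a-1)+\nu_2(a+1)\ge 3$, so $\alpha=\min(\nu_2(a^2-1),\nu_2(b^2-1))-1\ge 2$; this valuation governs the $2$-power torsion, i.e.\ the order of $a$ and $b$ modulo powers of $2$, and is exactly the quantity by which the $2$-adic estimate sharpens. Bugeaud's theorem then yields an upper bound of the form
$$\beta z = \nu_2(a^x + b^y) \le \kappa(\alpha)\,\log(a)\log(b)\,\bigl(\max\{\log B,\ \text{const}\}\bigr)^2,$$
where $B$ measures the size of the exponents $x,y$ and $\kappa(\alpha)$ absorbs the $p=2$ constants, the factor $p/(p-1)=2$, the corrections $m_2=\log(8)/\log(\min(a,b))$ for small bases $\min(a,b)\le 7$, the torsion factor $m_3 = \alpha\log 2/\log(2^\alpha-1)$, and the term $(1+\log(v_a)/(v_a-1))^2$ arising from the optimization of the auxiliary parameter in the two-logarithm estimate.

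Next I would eliminate $x$ and $y$ in favour of $z$ and $\log c$. Since $a^x < c^z$ and $b^y < c^z$, we have $x < z\log(c)/\log(a)$ and $y < z\log(c)/\log(b)$, so the height parameter $B$ is bounded by a constant times $z\log(c)$, up to division by $\log(\min(a,b))$, which is where the remaining appearances of $m_2$ and $m_3$ enter. Substituting $\log B \le \log(k_3\log(c)) + (\text{lower order})$ turns the right-hand side into $\log(a)\log(b)\,k_2\,\log_*^2(k_3\log(c))$, while the alternative maximand $\log(a)\log(b)\,k_1$ covers the regime where the constant term in the $\max$ dominates. Dividing by $\beta$ and collecting the two regimes $\alpha=2$ and $\alpha\ge 3$ gives exactly $(1.2)$ with the stated triples $(k_1,k_2,k_3)$; the monotonicity claim follows by inspection once the explicit forms are in hand, since $\alpha$ and $\beta$ appear only in denominators (and inside slowly varying factors) of each $k_i$.

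The main obstacle will not be the structure of the argument but the \emph{bookkeeping of the explicit constants}: extracting the precise coefficients $1803.3$, $23.865$, $143.75$ for $\alpha=2$ and $2705$, $156.39$, $646.9$ for $\alpha\ge 3$ requires feeding the optimal choice of auxiliary parameters into Bugeaud's estimate and tracking every factor of $\log 2$, every torsion correction $m_2$, $m_3$, $v_a$, and every height term through the inequality. Separating into the two regimes is forced by the fact that the two-logarithm bound degrades as $\alpha\to 2$, so the case $\alpha=2$ must carry its own (worse) constants. The delicate point that closes the proof is verifying that the final $\max\{k_1,\,k_2\log_*^2(k_3\log(c))\}$ genuinely dominates $\beta z$ for \emph{all} admissible $x,y$, not merely for the exponents that optimize the linear-form estimate.
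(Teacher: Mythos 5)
First, a point of reference: the paper does not prove this statement at all --- it is quoted verbatim as Lemma~3.3 of Miyazaki and Pink \cite{MP}, itself resting on Bugeaud's explicit $2$-adic lower bounds for linear forms in two logarithms \cite{Bu}. So there is no in-paper proof to compare against, only the source you would have to reproduce. Your strategic outline does match that source: since $c$ is even and $\gcd(a,b)=1$, both $a$ and $b$ are odd, $\nu_2\bigl((-1)a^xb^{-y}-1\bigr)=\nu_2(c^zb^{-y})=\beta z$, and one bounds this valuation from above by Bugeaud's theorem, then eliminates the exponent-height parameter using $x<z\log(c)/\log(a)$ and $y<z\log(c)/\log(b)$. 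Your observation that $\alpha\ge 2$ for odd bases is also correct and is exactly why the two regimes split at $\alpha=2$ versus $\alpha\ge 3$.

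That said, as a proof of \emph{this} lemma the proposal has a genuine gap: the entire mathematical content of the statement is the explicit triples $(k_1,k_2,k_3)$, and you explicitly defer their derivation as ``bookkeeping.'' Nothing in your text pins down which of Bugeaud's theorems is invoked, what auxiliary parameters are chosen, how the heights $\log a$, $\log b$ enter, or how the $\alpha$-dependent gain (via $m_3$, $v_a$, and the factor $\alpha^3$ in the denominator of $k_2$) is extracted; without that, the inequality $(1.2)$ is asserted rather than proved, and there is no way to confirm the numbers $1803.3$, $23.865$, $143.75$, $2705$, $156.39$, $646.9$. Two smaller imprecisions: the hypothesis $\max(a,b)\ge 9$ is not really about non-degeneracy of the linear form (coprimality and $a,b>1$ already give multiplicative independence) but is needed to make the explicit constants valid, with the companion case $\max(a,b)<10$ handled in the paper by Bennett--Billerey instead; and the closing monotonicity claim does require checking that $m_3=\alpha\log 2/\log(2^\alpha-1)$ and $\bigl(1+\log(v_a)/(v_a-1)\bigr)^2$ decrease in $\alpha$, which is true but not ``by inspection'' of denominators alone. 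To make this a proof you would either have to carry out the full constant-tracking from \cite{Bu}, or --- as the paper does --- simply cite \cite{MP} for the statement.
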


When $c$ is even and $\max(a,b) < 10$, Theorem 7.2 of Bennett and Billerey \cite{BeBi} shows that (1.1) has no double solutions other than those listed in the conjecture. 

Suppose (1.1) has two solutions $(x_1, y_1, z_1)$ and $(x_2, y_2, z_2)$ with $z_1 \le z_2$.  For given $a$, $b$, and even $c$, the proof of Lemma 5.1 of \cite{MP} yields

\begin{Lemma}[Miyazaki and Pink]    
Let $c$ be even, $\alpha$ and $\beta$ defined as in Lemma 1, and $z_1 \le z_2$.  We have 
$$ \beta z_1 - \frac{\log(z_1)}{\log(2)} < \alpha + \frac{1}{\log(2)} \log\left( \frac{\log^2(c)}{\log(a) \log(b)} z_2 \right).  \eqno{(1.3)}$$ 
\end{Lemma}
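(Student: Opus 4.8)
The plan is to reduce the two equations to a single high power-of-two divisibility and then to bound the resulting $2$-adic valuation from above. Since $c$ is even and $\gcd(a,b)=1$, both $a$ and $b$ are odd; hence $a^2\equiv b^2\equiv1\pmod 8$, so $\alpha\ge2$, and moreover $a$, $b$, $c$ are pairwise coprime (a common prime of, say, $a$ and $c$ would divide $b^{y_1}$). Reducing each solution modulo $2^{\beta z_1}$, which is legitimate for the second solution because $\beta z_2\ge\beta z_1$, gives $a^{x_1}\equiv-b^{y_1}$ and $a^{x_2}\equiv-b^{y_2}\pmod{2^{\beta z_1}}$. Dividing these two congruences cancels the sign and yields $a^{x_2-x_1}\equiv b^{y_2-y_1}\pmod{2^{\beta z_1}}$. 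Writing $X=x_2-x_1$ and $Y=y_2-y_1$, not both zero (else the solutions coincide), the congruence reads $a^{X}\equiv b^{Y}\pmod{2^{\beta z_1}}$; in the principal case $X,Y\ge0$ this gives the integer divisibility $2^{\beta z_1}\mid b^{Y}-a^{X}$, that is $\beta z_1\le\nu_2(b^{Y}-a^{X})$, and the remaining sign patterns are handled identically after moving a negative exponent to the opposite side.

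I would then linearize using the $2$-adic logarithm $L$, for which $\nu_2(L(u))=\nu_2(u-1)$ whenever $u\in 1+4\intZ$. Assuming $\beta z_1\ge2$ (otherwise (1.3) is immediate, as $\alpha\ge2$), the unit $a^{X}b^{-Y}$ lies in $1+4\intZ$, so $\beta z_1\le\nu_2(L(a^{X}b^{-Y}))=\nu_2(X\lambda_a-Y\lambda_b)$, where $\lambda_a=L(a)$ and $\lambda_b=L(b)$ (after the routine passage to the square of a base $\equiv3\pmod4$). The valuations $\nu_2(\lambda_a)$, $\nu_2(\lambda_b)$ are $\nu_2(a-1)$, $\nu_2(b-1)$, and their minimum equals $\alpha$ up to the standard $\bmod\,4$ bookkeeping. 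If $X\lambda_a$ and $Y\lambda_b$ have \emph{different} $2$-adic valuations, then $\nu_2(X\lambda_a-Y\lambda_b)$ is their minimum, hence at most $\alpha+\nu_2(X)\le\alpha+\log(X)/\log(2)$ (or the symmetric bound in $Y$). Inserting the elementary size estimates $X\le x_2<z_2\log(c)/\log(a)$ and $Y\le y_2<z_2\log(c)/\log(b)$, together with $z_1\log(c)>\log(b)$ (from $c^{z_1}>b^{y_1}\ge b$), turns this directly into (1.3).

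The hard part is the complementary case, in which $X\lambda_a$ and $Y\lambda_b$ share the same valuation and their leading terms cancel. Then $\nu_2(X\lambda_a-Y\lambda_b)$ can greatly exceed the minimum of the two valuations: the congruence forces the rational $Y/X$ to approximate the fixed $2$-adic number $\lambda_a/\lambda_b$ to precision about $\beta z_1-\alpha$. Excluding such deep cancellation for bounded $X$, $Y$ is exactly a statement about a $2$-adic linear form in two logarithms, and this is where the substance of Lemma 5.1 of \cite{MP} resides. I would follow their refined $2$-adic analysis, tracking the unit parts of $\lambda_a$ and $\lambda_b$ and iterating the cancellation; this is the step that produces the shift by $\alpha$, the correction $\log(z_1)/\log(2)$ on the left of (1.3), and the sharp product $z_1z_2$ (rather than $z_2^2$) inside the logarithm. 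Finally I would tidy up the minor points: the convergence adjustment for a base $\equiv3\pmod4$, the sign bookkeeping already absorbed by passing to the ratio of the two congruences, the degenerate case $z_1=z_2$ (where $c^{z_2-z_1}=1$ but the congruence $a^{X}\equiv b^{Y}\pmod{2^{\beta z_1}}$ still holds), and the symmetric orderings of $x_1,x_2$ and of $y_1,y_2$.
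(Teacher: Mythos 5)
There is a genuine gap, and it sits exactly where you flag ``the hard part.'' Your reduction $a^{X}\equiv b^{Y}\pmod{2^{\beta z_1}}$ with $X=x_2-x_1$, $Y=y_2-y_1$ leads to a genuine two-term $2$-adic linear form $X\lambda_a-Y\lambda_b$, and in the cancellation case you have no elementary control over its valuation; you defer to ``the refined $2$-adic analysis'' of Miyazaki and Pink, but their Lemma 5.1 does not perform any such analysis, because their reduction never produces a two-term form in the first place. The actual argument (which the paper displays explicitly in its proof of Lemma 3, the odd-$c$ analogue) cross-multiplies the exponents rather than subtracting them: from $a^{x_1}\equiv -b^{y_1}$ and $a^{x_2}\equiv -b^{y_2}\pmod{2^{\beta z_1}}$ one raises to the powers $y_2$ and $y_1$ respectively to eliminate $b$ entirely, obtaining $a^{|x_1y_2-x_2y_1|}\equiv\pm1\pmod{2^{\beta z_1}}$. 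This is a one-term form whose valuation is computed exactly by the elementary lifting-the-exponent identity, giving $\beta z_1\le \nu_2(a^2-1)-1+\nu_2(|x_1y_2-x_2y_1|)$ (taking whichever of $a,b$ realizes $\alpha$), and then $|x_1y_2-x_2y_1|\le\max(x_1y_2,x_2y_1)\le\frac{\log^2(c)}{\log(a)\log(b)}z_1z_2$ yields (1.3) directly after moving $\log(z_1)/\log(2)$ to the left. So the $z_1z_2$ product and the $\log(z_1)/\log(2)$ correction fall out of this trivial size estimate, not out of any iterated cancellation.

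The one non-trivial input in that route is the non-vanishing $x_1y_2\ne x_2y_1$ (Hu and Le, Lemma 3.3 of \cite{HL}), which your proposal never invokes; your substitute condition ``$X,Y$ not both zero'' is a different and insufficient non-degeneracy statement. Without switching to the determinant $x_1y_2-x_2y_1$, completing your version would require a lower bound for a $2$-adic linear form in two logarithms with the sharp dependence claimed in (1.3) --- essentially the content of Bugeaud's deep estimate used in Lemma 1, which gives numerically far weaker conclusions. As written, the proposal proves the easy non-cancellation case and leaves the substance of the lemma unproved.
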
 

The same 2-adic arguments of Miyazaki and Pink apply when $c$ is odd, only now giving us a bound on $\min(x_1, x_2)$ instead of $\min(z_1, z_2)$.   

\begin{Lemma} 
Let $a$ be even.  Let $\nu_2(a) = \gamma$ and $\nu_2(b^2-1)-1 = \delta$.  Suppose (1.1) has two solutions $(x_1, y_1, z_1)$ and $(x_2, y_2, z_2)$ with $x_1 \le x_2$.  Then 
$$ \gamma x_1 < \delta + \frac{1}{\log(2)} \log\left( \frac{\log(c)}{\log(b)} z_1 z_2 \right). \eqno{(1.4)} $$
\end{Lemma}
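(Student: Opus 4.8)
The plan is to mimic the 2-adic argument that produced Lemma 2, but to run it on the equation in a form where the even base $a$ plays the role that $c$ played in the even-$c$ case. Since $a$ is even, for any solution we have $\nu_2(a^x) = \gamma x$, and the other two terms $b^y$ and $c^z$ are both odd. The key algebraic object is therefore the 2-adic valuation of $c^z - b^y = a^x$, namely $\nu_2(c^z - b^y) = \gamma x$. First I would compare the two solutions $(x_1,y_1,z_1)$ and $(x_2,y_2,z_2)$: from $c^{z_1} \equiv b^{y_1}$ and $c^{z_2} \equiv b^{y_2} \pmod{2^{\gamma x_i}}$ one extracts, via standard lifting-the-exponent estimates for the odd bases $b$ and $c$, a lower bound on $\gamma x_1 = \nu_2(c^{z_1}-b^{y_1})$ in terms of $\delta = \nu_2(b^2-1)-1$ and the 2-adic valuations of the relevant exponent differences.

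The heart of the matter is to bound $\gamma x_1$ from above. I expect the argument to proceed exactly as in Miyazaki–Pink's Lemma 5.1: the quantity $\nu_2(c^{z_1} - b^{y_1})$ is controlled by $\delta$ plus the 2-adic valuation of an exponent combination, and that valuation is in turn bounded (through an application of a linear-forms-in-two-logarithms or Bugeaud-type estimate, or more elementarily through $\nu_2(n) \le \log(n)/\log(2)$) by a logarithm of the exponents appearing. The exponents $y$ and $z$ are linearly tied to one another through the original equation, so taking logarithms of $a^x + b^y = c^z$ gives $y\log(b) < z\log(c)$ and $x \log(a) < z \log(c)$, which lets me replace the various exponents by expressions in $z_1$, $z_2$, $\log(b)$, $\log(c)$. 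Assembling these, the upper bound for $\gamma x_1$ takes the shape $\delta + \frac{1}{\log 2}\log\!\bigl(\frac{\log(c)}{\log(b)} z_1 z_2\bigr)$, matching (1.4).

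More concretely, I would write $b^{y_2} - b^{y_1} \equiv c^{z_2} - c^{z_1} \pmod{2^{\gamma x_1}}$ (subtracting the two instances of (1.1), using $\gamma x_1 \le \gamma x_2$), factor out the common lower powers, and apply the lifting-the-exponent lemma to each side: $\nu_2(b^{y_2-y_1}-1) = \delta + \nu_2(y_2 - y_1)$ for odd $b$ once $y_2 - y_1$ is even, and similarly for the $c$-side governed by $\nu_2(c^2-1)$. The crude bounds $\nu_2(y_2-y_1) \le \log(y_2)/\log 2$ and $\nu_2(z_2-z_1)\le \log(z_2)/\log 2$, together with $y_i \log(b) < z_i \log(c)$, then collapse everything into the single $\log(z_1 z_2)$ term of (1.4), with the $\log(c)/\log(b)$ factor accounting for the passage between the $y$-exponents and the $z$-exponents.

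The main obstacle will be handling the parity and matching conditions on the exponent differences correctly so that the lifting-the-exponent estimates apply cleanly. In particular one must verify that $y_2 - y_1$ (or the relevant combination) is even before invoking the LTE formula for odd bases, and one must correctly identify which of $b$ and $c$ contributes the dominant valuation; this is precisely where $\delta = \nu_2(b^2-1)-1$ rather than $\nu_2(c^2-1)-1$ enters, and getting the right base here is the delicate point. Once the correct 2-adic comparison is set up, the remaining steps are the routine logarithmic substitutions, so I would expect the bulk of the care to go into the valuation bookkeeping at the start.
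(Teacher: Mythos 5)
Your overall instinct is right---the lemma is proved by a 2-adic valuation argument in the style of Miyazaki--Pink, ending with the crude bound $\nu_2(n)\le \log(n)/\log(2)$ and the substitutions $y_i \le z_i\log(c)/\log(b)$---but the concrete congruence you set up is not the one the paper uses, and as written it has a gap you yourself flag but do not close. Subtracting the two instances of (1.1) gives $c^{z_1}(c^{z_2-z_1}-1)-b^{y_1}(b^{y_2-y_1}-1)=a^{x_1}(a^{x_2-x_1}-1)$, whose right side has valuation $\gamma x_1$; to conclude $\gamma x_1\le \nu_2(b^{y_2-y_1}-1)\le \delta+\nu_2(y_2-y_1)$ you need the two terms on the left to have \emph{different} 2-adic valuations (so that the minimum is attained), and when $\nu_2(c^{z_2-z_1}-1)=\nu_2(b^{y_2-y_1}-1)$ the valuation of their difference can be arbitrarily larger than either, so no upper bound on $\gamma x_1$ follows. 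This is exactly the ``which of $b$ and $c$ contributes the dominant valuation'' issue you call the delicate point; it is not a bookkeeping matter but the place where your route breaks. There are also degenerate cases ($z_1=z_2$ or $y_1=y_2$) where one factor vanishes and the LTE step does not apply.

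The paper's proof avoids the $c$-side entirely by a cross-exponentiation trick: from $b^{y_1}\equiv c^{z_1} \bmod a^{x_1}$ and $b^{y_2}\equiv c^{z_2}\bmod a^{x_2}$ it raises the first congruence to the power $z_2$ and the second to the power $z_1$, so both right sides become $c^{z_1z_2}$ and cancel, yielding the single pure-power congruence $b^{|y_1z_2-y_2z_1|}\equiv 1 \bmod a^{x_1}$, hence $2^{\gamma x_1}\mid b^{|y_1z_2-y_2z_1|}-1$. Only $\nu_2(b^2-1)$ ever enters, which is why $\delta$ and not $\nu_2(c^2-1)$ appears in (1.4). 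Two ingredients you are missing then finish it: the non-degeneracy $y_1z_2\ne y_2z_1$ (quoted from Hu--Le, needed so the exponent is a positive integer), and the bound $|y_1z_2-y_2z_1|<\max(y_1z_2,y_2z_1)\le \frac{\log(c)}{\log(b)}z_1z_2$, which is precisely where the $z_1z_2$ inside the logarithm of (1.4) comes from. Your proposed exponent differences $y_2-y_1$ and $z_2-z_1$ would produce a bound of a different shape (involving a single $z_2$ rather than the product $z_1z_2$), which is a further sign that the mechanism you describe is not the one that yields (1.4). To repair your write-up, replace the subtraction step by the cross-exponentiation step and cite the Hu--Le nonvanishing result.
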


\begin{proof}  (Following the ideas in the proof of Lemma 5.1 in \cite{MP}.)
From $a^x < c^z$ and $b^y < c^z$ we obtain the bounds $x_1 \le z_1 \log(c)/\log(a)$, $y_1 \le z_1 \log(c)/\log(b)$, $x_2 \le z_2 \log(c)/\log(a)$, $y_2 \le z_2 \log(c)/\log(b)$. 

From (1.1) we have $b^{y_1} \equiv c^{z_1} \bmod a^{x_1}$ and $b^{y_2} \equiv c^{z_2} \bmod a^{x_2}$, so $b^{y_1 z_2} \equiv c^{z_1 z_2} \bmod a^{x_1}$ and $b^{y_2 z_1} \equiv c^{z_2 z_1} \bmod a^{x_2}$.  Thus, $b^{y_1 z_2} \equiv b^{y_2 z_1} \bmod a^{x_1}$ hence $b^{|y_1 z_2 - y_2 z_1|} \equiv 1 \bmod a^{x_1}$.  Thus, $2^{\gamma z_1} | b^{|y_1 z_2 - y_2 z_1|}-1$.  Hu and Le \cite[Lemma 3.3]{HL} show that $y_1 z_2 \ne y_2 z_1$; their result assumes $a^{x_1} > 2$, but (1.4) is clear when $a=2$ and $x_1=1$. Applying well known 2-adic properties, we have $\gamma z_1 \le \nu_2(b^2-1)-1 + \nu_2(|y_1 z_2 - y_2 z_1|)$.  

Now 
$$|y_1 z_2 - y_2 z_1| < \max(y_1 z_2, y_2 z_1) \le \max\left( z_1 \frac{\log(c)}{\log(b)} z_2, z_2 
\frac{\log(c)}{\log(b)} z_1 \right) = \frac{\log(c)}{\log(b)} z_1 z_2.  $$
Thus, 
$$ \gamma x_1 \le \nu_2(b^2-1)-1 + \nu_2(|y_1 z_2 - y_2 z_1|) < \delta + \frac{1}{\log(2)} 
\log\left( \frac{\log(c)}{\log(b)} z_1 z_2 \right).$$
\end{proof} 

Miyazaki and Pink use known results on generalized Fermat equations to eliminate many cases.  The following is based on their Lemma 8.1.

\begin{Lemma}  
Equation (1.1) has no solutions $(x,y,z)$ in the following cases:
\begin{align*}
& x \equiv y \equiv z \equiv 0 \bmod N, N \ge 3, \\
& x \equiv y \equiv 0 \bmod N, z \equiv 0 \bmod 2, N \ge 4, \\
& x \equiv y \equiv 0 \bmod N, z \equiv 0 \bmod 3, N \ge 3, \\ 
& x \equiv 0 \bmod 2, y \equiv 0 \bmod 4, z \ge 4, \\
& x \equiv 0 \bmod 2, y \ge 4, z \equiv 0 \bmod 4, \\
& x \equiv 0 \bmod 2, y \ge 3, z \equiv 0 \bmod 6, \\
& x \equiv 0 \bmod 2, y \equiv 0 \bmod 6, z \ge 3, \\
& x \equiv 0 \bmod 3, y \equiv 0 \bmod 3, z \equiv 0 \bmod N, 3 \le N \le 10^9, \\
& x \equiv 0 \bmod 3, y \equiv 0 \bmod 4, z \equiv 0 \bmod 5, \\
& x \equiv 0 \bmod 2, y \equiv 0 \bmod 3, z \equiv 0 \bmod N, N \in \{ 7,8,9,10,15\}. \\
\end{align*}
\end{Lemma}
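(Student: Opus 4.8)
The plan is to show that in each listed case a solution of (1.1) would produce a primitive, nontrivial solution of a generalized Fermat equation that is already known to have none, contradicting the hypothesized solution. The reduction mechanism is uniform: whenever the exponent of a base is divisible by $k$, I rewrite that term as a perfect $k$-th power, e.g.\ $x \equiv 0 \bmod k$ gives $a^x = (a^{x/k})^k$, and likewise for $b^y$ and $c^z$; when an exponent is only constrained by an inequality (the conditions $y \ge 3$, $y \ge 4$, $z \ge 3$, $z \ge 4$) I keep that term as the ``free'' $n$-th power with $n$ its own exponent. Since $\gcd(a,b)=1$ forces $a$, $b$, $c$ to be pairwise coprime (a common prime factor of two of them would, via (1.1), divide the third), the resulting base triple is pairwise coprime; and since $a,b,c>1$ with positive exponents, each base is at least $2$, so the solution obtained is genuinely nontrivial. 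Thus every line reduces to the assertion that a fixed generalized Fermat equation, or a family indexed by a free exponent $n$, has no nontrivial primitive solutions.

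I would then dispatch the cases by invoking the appropriate resolution. The first line, signature $(N,N,N)$ with $N \ge 3$, is Fermat's Last Theorem (reducing a composite $N$ to an odd prime factor, or to the exponent $4$). The second and third lines, signatures $(N,N,2)$ for $N \ge 4$ and $(N,N,3)$ for $N \ge 3$, are covered by the theorems of Darmon and Merel, together with Fermat's classical insolubility of $X^4 \pm Y^4 = Z^2$ for the residual small exponents. The $\{2,4,n\}$ families in lines four and five and the $\{2,6,n\}$ families in lines six and seven follow from the modular-method analysis of these signatures: the Euclidean boundary cases $(2,4,4)$ and $(2,3,6)$ are classical, and the hyperbolic ranges are handled by the work of Bennett--Skinner, Ellenberg, and Bennett--Ellenberg--Ng. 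Line eight, signature $(3,3,N)$ for $3 \le N \le 10^9$, follows from the study of the $(3,3,n)$ family; line nine is the single equation of signature $(3,4,5)$; and line ten collects the Fermat--Catalan equations of signatures $(2,3,7)$, $(2,3,8)$, $(2,3,9)$, $(2,3,10)$, and $(2,3,15)$, each resolved individually in the literature.

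The routine part is the bookkeeping: tracking which perfect power sits in which position for each congruence pattern, and reducing composite exponents to the prime (or prime-power) exponents for which the cited theorems are stated. The main obstacle, and where I would be most careful, is the free-exponent hyperbolic families in lines four through seven: here one needs a single source that covers \emph{all} $n$ in the stated range rather than a case-by-case elliptic-curve computation, and one must verify that the particular realization of the signature (whether the $n$-th power appears on the left or the right, and which of $2$, $4$, $6$ accompanies it) exactly matches the statement of the invoked theorem. The interval $3 \le N \le 10^9$ in line eight is the other delicate point, since it blends a general modular result with an explicit computational verification over the remaining exponents; I would confirm that the cited source genuinely covers the full range and that coprimality and nontriviality are preserved through every reduction.
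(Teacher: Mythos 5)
Your overall strategy matches the paper's: the paper gives no proof of this lemma at all, deferring entirely to Lemma 8.1 of Miyazaki--Pink, which is obtained by exactly the reduction you describe (rewrite each term whose exponent is divisible by $k$ as a perfect $k$-th power, observe that $\gcd(a,b)=1$ forces pairwise coprimality, and invoke the known resolutions of the resulting generalized Fermat signatures). Your handling of lines one through nine is sound, including the points you flag as delicate (the free-exponent families of signatures $(2,4,n)$, $(2,n,4)$, $(2,6,n)$, $(2,n,6)$, and the computational range $3\le N\le 10^9$ for signature $(3,3,N)$).

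There is, however, a genuine gap in your treatment of the last line. For the Fermat--Catalan signatures $(2,3,7)$, $(2,3,8)$, $(2,3,9)$, $(2,3,10)$, ``resolved in the literature'' means that the \emph{complete list} of primitive solutions has been determined, and for several of them that list is nonempty in precisely the arrangement the lemma requires. Concretely, $13^2+7^3=2^9$ is a primitive solution of $X^2+Y^3=Z^9$ with all bases exceeding $1$, so $(a,b,c)=(13,7,2)$ with $(x,y,z)=(2,3,9)$ satisfies (1.1) together with $x\equiv 0 \bmod 2$, $y\equiv 0\bmod 3$, $z\equiv 0\bmod 9$; likewise $2213459^2+1414^3=65^7$ and $15312283^2+9262^3=113^7$ realize the signature $(2,3,7)$ case. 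Your blanket assertion that these equations have no nontrivial primitive solutions is therefore false, and the deduction for line ten collapses. A correct argument must carry along the finite list of exceptional triples produced by Poonen--Schaefer--Stoll, Bruin, Brown, and Siksek (as Miyazaki and Pink do in their Lemma 8.1) and then verify, in whatever context the lemma is applied, that these exceptional triples do not occur; note that this is also an imprecision in the statement as reproduced in the paper, which omits the exception list present in the source.
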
 

When $c$ is odd, we use a significant improvement of Scott's Theorem 2 in \cite{Sc} to bound the values of $z$. We note that the set of possible $z$ values given here is independent of $c$.   

\begin{Theorem}[Scott]  
Let $R$ be a set of positive rational primes, let $S$ be the set of all integers greater than one all of whose prime divisors are in $R$, and let $T$ be the set of all integers in $S$ divisible by every prime in $R$.  Let $P$ and $Q$ be relatively prime squarefree integers such that $PQ \in T$.  Take $A, B \in S$ such that $AB \in T$, $\gcd(A,B)=1$, and $(AB/P)^{1/2}$ is an integer.  

Then for odd $c$ with $\gcd(c, AB)=1$, suppose 
$$ A + B = c^z  \eqno{(1.5)}$$ 
has a solution $(A, B, z)$.  Then 
$$ z \big\vert \frac{3^{u+v}}{2} h(-P) t_k \eqno{(1.6)}$$
where $t_k = q_k - \Legendre{-P}{q_k}$ for some $k = 1, \dots, n$ with $Q = q_1 q_2 \cdots q_n$ the prime factorization of $Q$, $h(-P)$ is the least $h$ such that $\mathfrak{a}^h$ is principal for each ideal $\mathfrak{a}$ in $\ratQ(\sqrt{-P})$, $u=1$ or $0$ according as $ 3 < P \equiv 3 \bmod 8$ or not, and $v = 1$ or 0 according as $\{ A, B \}$ is or is not $\{ 3^{2N+1} \frac{3^{N-1}-1}{8}, \frac{3^{N+1}-1}{8} \}$ for odd $N>1$.  Here we set the Legendre symbol $\Legendre{-P}{q_k} = 0$ when $q_k=2$. 
\end{Theorem}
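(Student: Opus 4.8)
The plan is to pass to the imaginary quadratic field $K = \ratQ(\sqrt{-P})$ and exploit the hypothesis that $AB/P$ is a perfect square. Writing $AB = P m^2$, the condition $AB \in T$ forces every prime of $Q$ to divide $m$ (the $q_k$ are coprime to the squarefree $P$), so $Q \mid m$ and in particular each $q_k \mid m$. First I would form the algebraic integer $\theta = (A-B) + 2m\sqrt{-P} = (\sqrt{A} + \sqrt{-B})^2 \in \mathcal{O}_K$, whose norm is $N_{K/\ratQ}(\theta) = (A-B)^2 + 4Pm^2 = (A+B)^2 = c^{2z}$. The arithmetic of the equation is thereby recast as a statement about the factorization of $\theta$.

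Next I would show that $\theta$ and its conjugate $\bar\theta$ are coprime at every prime of $\mathcal{O}_K$ lying above $c$: a common such prime would divide $\theta - \bar\theta = 4m\sqrt{-P}$, which is impossible because $c$ is odd and $\gcd(c, AB)=1$ (so $c$ is coprime to $2Pm$), while $\sqrt{-P}$ has norm $P$. Since the prime divisors of $c$ are coprime to the discriminant and hence unramified, this coprimality upgrades the norm identity to an ideal identity $(\theta) = \mathfrak{a}^{2z}$, where $\mathfrak{a}$ collects one prime above each split rational prime dividing $c$, so that $\mathfrak{a}\bar{\mathfrak{a}} = (c)$ and $N\mathfrak{a} = c$; inert prime divisors of $c$, if any, contribute rational factors and must be tracked separately.

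I would then bring in the class number. Letting $h = h(-P)$ be the exponent in the statement, the order $d$ of the ideal class $[\mathfrak{a}]$ divides $h$; fixing a generator $\mathfrak{a}^d = (\gamma)$ gives $\theta = \varepsilon \gamma^{2z/d}$ for a unit $\varepsilon \in \mathcal{O}_K^\times$. The unit group is $\{\pm 1\}$ except for $K = \ratQ(i)$ and $K = \ratQ(\sqrt{-3})$, and it is exactly this extra supply of roots of unity, together with the exceptional family $\{A,B\} = \{3^{2N+1}\frac{3^{N-1}-1}{8}, \frac{3^{N+1}-1}{8}\}$ in $\ratQ(\sqrt{-3})$ (for which $A+B = (\frac{3^N-1}{2})^3$, forcing $z = 3$), that will be absorbed into the correction factor $3^{u+v}$ and the denominator $2$. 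Finally, the local data at the primes $q_k \mid Q$ enter through the Lucas-type sequence with roots $\gamma, \bar\gamma$: the divisibility $q_k \mid m$ says that $q_k$ divides the relevant term of that sequence, and the rank of apparition of $q_k$ divides $q_k - \Legendre{-P}{q_k} = t_k$. Combining the global constraint $d \mid h$ with this local constraint at a suitably chosen index $k$ should pin the admissible exponents down to the divisors of $\frac{3^{u+v}}{2} h(-P) t_k$.

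The main obstacle is precisely this last step: converting the class-group order data and the rank-of-apparition data at $q_k$ into a clean \emph{divisibility} of $z$ rather than a mere congruence, and doing so with the exact constants. The delicate bookkeeping is in rigorously isolating the exceptional family and the extra units of $\ratQ(i)$ and $\ratQ(\sqrt{-3})$ that force the factor $3^{u+v}/2$, in handling composite $c$ with inert prime divisors, and in verifying that a single well-chosen $k$ always suffices; I expect these points to consume most of the effort.
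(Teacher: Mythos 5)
Your setup coincides with the paper's: the element $(B-A)+2\sqrt{-AB}$ has norm $c^{2z}$, the coprimality of the two conjugate factors gives the ideal equation $[\,B-A+2\sqrt{-AB}\,]=\mathfrak{c}^{2z}$ with $\mathfrak{c}\overline{\mathfrak{c}}=[c]$, and the class group and the rank of apparition of the $q_k$ in the resulting Lucas-type sequence are the right objects to study. But the step you defer as ``the main obstacle'' is in fact the entire content of the theorem, and your outline does not contain the two ingredients needed to close it. First, the argument as you have set it up only shows that the rank of apparition $j$ of $2Q$ in the sequence $b_i$ \emph{divides} $2z$ (each $q_k\mid m$ forces $wg_k\mid 2z$); this is a lower bound on $z$ and points in the wrong direction. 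To reverse it one needs the uniqueness statement (Observation~1 in the paper, resting on Theorems~1 and~2 of Scott's 1993 paper and on \cite{ScSt8}): for a fixed ideal factorization $\mathfrak{c}\overline{\mathfrak{c}}$ there is at most one solution with $A,B$ in $S$ and $AB\in T$, and it necessarily has $z=3^v j/2$. Nothing in your proposal supplies this rigidity, and without it no divisibility of the form $z\mid(\cdots)$ can be extracted.

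Second, even granting $z=3^vj/2$, the naive combination of $d\mid h(-P)$ with ``rank of apparition of $q_k$ divides $t_k$'' yields only Scott's original bound $z\mid\frac{3^{u+v}}{2}h(-P)\operatorname{lcm}(t_1,\dots,t_n)$. The improvement to a \emph{single} $t_k$ — which is the point of the version stated here — requires the primitive-divisor argument: if $b_j$ has a primitive divisor it must divide $Q$ and one is done; otherwise the Bilu--Hanrot--Voutier classification confines $j/w$ to $\{12,18,30\}$ (beyond the prime-power and $2p$ cases), each of which is then dispatched by hand, ending with the single field element $1+\sqrt{-14}$. Your proposal never mentions primitive divisors or this classification. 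A smaller but real error: you attribute the factor $3^{u}$ to the extra units of $\ratQ(i)$ and $\ratQ(\sqrt{-3})$, but $u=1$ exactly when $3<P\equiv 3\bmod 8$, where the issue is that a generator of $\mathfrak{c}^{d}$ may have half-integer coordinates in $\intZ[\frac{1+\sqrt{-P}}{2}]$ and one must pass to a further power to obtain rational integer coefficients; the fields $\ratQ(i)$ and $\ratQ(\sqrt{-3})$ (i.e.\ $P=1,3$) are instead handled by the normalization of $a_w,b_w$.
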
 

In \cite{Sc} Scott had $\frac{3^{u+v}}{2} h(-P) \lcm(t_1, t_2, \dots, t_n)$ in place of the right side of (1.6) and also handled the case $c=2$; the proof in \cite{Sc} is elementary.  Circa 2005 Scott announced a stronger version of his 1993 theorem; in that announcement the $t_k$ values above can often be reduced by an extra factor of 2, but the statement of this stronger theorem is more complicated and the proof is much more complicated; the slightly weaker theorem given here is sufficient for our purposes.  

\begin{proof} 
Assume that (1.5) has a solution $(A,B,z)$, where we assume for convenience that $A<B$.  From (1.5) we have 
$$ (B-A + 2 \sqrt{-AB} ) (B-A - 2 \sqrt{-AB} ) = c^{2z}, $$
giving the equation in ideals 
$$ [B-A + 2 \sqrt{-AB} ] = \mathfrak{c}^{2z},  \eqno{(1.7)} $$
where $\mathfrak{c}$ is an ideal in $\ratQ(\sqrt{-P})$ such that $\mathfrak{c} \overline{ \mathfrak{c} }   = [c]$ and $\mathfrak{c}$ is not divisible by a principal ideal having a rational integer generator.  We say that a solution $(A,B,z)$ satisfying (1.7) is {\it associated} with the ideal factorization $\mathfrak{c} \overline{\mathfrak{c} }$.  We use the notation of \cite{Sc} except that we replace the $z$ in \cite{Sc} by $w$ to avoid confusion with the $z$ in (1.5).

Let $w$ be the least positive integer such that $\mathfrak{c}^w$ is a principal ideal having a generator with rational integer coefficients. Write 
$$ [a_w + b_w \sqrt{-P}] = \mathfrak{c}^w, [a_i + b_i \sqrt{-P}] =  [(a_w + b_w \sqrt{-P})^{i/w}] = [a_w + b_w \sqrt{-P}]^{i/w} = \mathfrak{c}^i$$
where $i$ can be any positive multiple of $w$.  If $P = 3$, we can take $a_w$, $b_w \in \intZ$, so that $|a_w|$, $|b_w|$ are uniquely determined; if $P=1$, we can take $b_w$ even, so that $|a_w|$, $|b_w|$ are uniquely determined.  

Let $j$ be the least number such that $2 Q \mid b_{j}$.  Then Observation 1 below follows from \cite{Sc} (Theorems 1 and 2 and their proofs; see also \cite{ScSt8} for a somewhat easier and more direct presentation). 

Observation 1 (\cite{Sc}, \cite{ScSt8}):  If (1.5) has a solution $(A,B,z)$ associated with $\mathfrak{c} \overline{ \mathfrak{c}}$ (with $A$ and $B$ satisfying all the restrictions in the statement of this theorem) then it has such a solution with $z = j/2$, where $j$ is defined as above for this $\mathfrak{c} \overline{ \mathfrak{c}}$.  This is the only such solution, except in the special case given by $v=1$ in the statement of the theorem, in which case $z=\frac{3j}{2}$, and there is no third solution associated with this $\mathfrak{c} \overline{ \mathfrak{c}}$.

(Note that although Theorem 1 of \cite{Sc} is given for $c$ prime, the derivation works just as well for $c$ composite if an ideal factorization $\mathfrak{c} \overline{ \mathfrak{c}}$ is specified (this is pointed out in the proof of Theorem 2 of \cite{Sc}).  The case of composite $c$ is handled more directly in \cite{ScSt8}.  Also the case $A=1$ is handled more directly in \cite{ScSt8}.  Note that the definitions of $j$ and $z$ in \cite{ScSt8} are not the same as in \cite{Sc}. \cite{ScSt8} is a revision of \cite{ScSt7}: the proofs of the two main lemmas (Lemmas 1 and 2) in \cite{ScSt8} are quite different from the corresponding proofs in \cite{ScSt7}.)

We see from (1.7) that $A$ and $B$ are completely determined for a given $z$ and a given $\mathfrak{c} \overline{ \mathfrak{c}}$ (even when $P = 1$ or 3), so that, for a given $z$, there is at most one pair $(A,B)$ such that $A+B = c^z$ with $(A,B,z)$ associated with $\mathfrak{c} \overline{ \mathfrak{c}}$, even if we allow $A$ and $B$ to be any coprime positive integers (and generalize the definition of \lq associated' appropriately).  Thus we have 

Observation 2:  The solution $(A,B,j/2)$ referred to in Observation 1 is unique in the sense that there are no other choices of $A$ and $B$ such that $A + B = c^{j/2}$ with $(A,B, j/2)$ associated with $\mathfrak{c} \overline{ \mathfrak{c}}$, even if we allow $A$ and $B$ to be any coprime positive integers.

Now assume that (1.5) has a solution $(A,B,z)$ associated with $\mathfrak{c} \overline{ \mathfrak{c}}$ with $A$ and $B$ satisfying the restrictions of the statement of this theorem, and define $j$ and $w$ as above for this $\mathfrak{c} \overline{ \mathfrak{c}}$.  By Observation 1 we have 
$$ z = 3^v \frac{j}{2},  \eqno{(1.8)}$$
where $v$ is as in the statement of the theorem.  From (1.8) we obtain 
$$ z = 3^v \frac{j}{2} = 3^v \frac{w}{2} \frac{j}{w} \mid \frac{3^{u+v}}{2} h(-P) \frac{j}{w} \eqno{(1.9)}$$

So it suffices to show that for some $k$, $ 1 \le k \le n$, 
$$ \frac{j}{w} \mid t_k. \eqno{(1.10)}$$
(Note that $t_k$ is independent of the choice of $\mathfrak{c} \overline{ \mathfrak{c}}$.) 

For each $k$, $ 1 \le k \le n$, let $g_k$ be the least number such that $q_k \mid b_{w g_k} $.  It is a familiar elementary result on the divisibility properties of the numbers $b_i$ that 
$$ g_k \mid t_k. \eqno{(1.11)} $$ 
If $b_j$ has a primitive divisor, then, by Observation 1 and (1.7), this primitive divisor divides $Q$, so that, by (1.11), we obtain (1.10).  So we can assume $b_j$ has no primitive divisor.  For this case it is helpful to observe that $j$ is the least number such that $Q \mid b_j$ (recall $j$ is defined as the least number such that $2Q \mid b_j$).  To see this, assume there is some $j_0 < j$ such that $Q \mid b_{j_0}$, and choose $j_0$ to be the least such number.  By Lemma 2 of \cite{Sc}, $j_0 \mid i$ for every $i$ such that $Q \mid b_i$.  Then we must have $j = 2 j_0$, so that $a_{j_0}^2 + b_{j_0}^2 P = c^{j_0} = c^{j/2}$ with $(a_{j_0}^2, b_{j_0}^2 P, j/2)$ associated with $\mathfrak{c} \overline{ \mathfrak{c}}$, so that $(a_{j_0}^2, b_{j_0}^2 P, j/2)$ must be the unique solution $(A,B,j/2)$ referred to in Observation 2, so that $a_{j_0}^2 \in S$ and $b_{j_0}^2 P \in S$.  
Since $Q \mid b_{j_0}$, we have $b_{j_0} P \in T$, so, since $\gcd(a_{j_0}, b_{j_0} P) = 1$, we must have $a_{j_0}^2 \not\in S$, giving a contradiction.  So we can assume that $j$ is the least number such that $Q \mid b_j$.  

So now, still assuming $b_j$ has no primitive divisor, choose $j_1$ and $j_2$ as follows: 
let $g_m$ be the greatest of the $g_k$; take $j_2 = w g_m$ (so that $j_2 < j$) and take $j_1 = w g_h$ for some $h$, $1 \le h \le n$ such that $g_h \nmid g_m$ (such $j_1$ is possible since, if $g_h \mid g_m$ for every $h$, $1 \le h \le n$, then $Q \mid b_{w g_m} = b_{j_2}$, contradicting $j_2 < j$).  

If $j/w$ is a prime or prime power, then, since $j_1 \mid j$ and $j_2 \mid j$, we must have $j_1 \mid j_2$, contradicting $g_h \nmid g_m$.  And if $j/w = 2p$ for some odd prime $p$, then the only possibility for $(j_1/w, j_2/w)$ is $(2, p)$, in which case $p = g_m \mid t_m$ so that $2p \mid t_m$ and (1.10) holds with $k=m$.  So we can assume $j/w$ is not a prime or prime power and is also not equal to $2p$ for any odd prime $p$.  

Now from Table 1 of \cite{BHV} we find that the only possible choices for $j/w$ are 12, 18, and 30.  If $j/w = 18$ (respectively, 30), we see that from Table 1 of \cite{BHV} that $b_{9w}$ (respectively, $b_{15w}$) has a primitive divisor which must divide $Q$ (by Lemma 1 of \cite{Sc}, Observation 1, and (1.7)), so we can take this primitive divisor to be $q_k$ for some $1 \le k \le n$, so that 9 (respectively, 15) equals $g_k$ which divides $t_k$, so that $18$ (respectively, 30) divides $t_k$ and (1.10) holds.  

So we are left with $j/w = 12$.  Since $a_w + b_w \sqrt{-P}$ must be an integer in an imaginary quadratic field with $a_w$ and $b_w$ rational integers, 
Table 1 of \cite{BHV} shows that we need to consider only one case: 
$$ a_w + b_w \sqrt{-P} = 1 + \sqrt{-14}.$$
For this case $w=1$ and we find $11 \mid b_3 \mid b_{12}$, so that $11 \mid Q$, and, taking $q_k = 11$, we find $t_k = 12 = j/w$, so that (1.10) holds.  
\end{proof}

\section{Proving Theorem 1}  

We now outline the algorithms used to verify Theorem 1.  Assume we have two solutions to (1.1), $a^{x_1}+b^{y_1} = c^{z_1}$ and $a^{x_2} + b^{y_2} = c^{z_2}$.   

\noindent
Case 1: $c$ is even.  

Choose $a,b < 3600$ odd, not perfect powers, $\gcd(a,b)=1$, and assume $b<a$.  Set $M_c = 10^{10}$.  Suppose $z_1 \le z_2$.  Replacing $c$ by $M_c$ in (1.2), we obtain a bound on $z_2$, call this $M_2$.  Replacing $z_2$ by $M_2$ in (1.3), we use (1.3) to find a bound $M_1$ on $z_1$.  For each $z_1 \le M_1$, we now have bounds 
$$ x_1 \le z_1 \frac{\log(M_c)}{\log(a)}, y_1 \le z_1 \frac{\log(M_c)}{\log(b)}.$$
For given $(x_1,y_1,z_1)$ within these bounds, we first see if Lemma 4 eliminates it. If $z_1=1$, we know $c = a^{x_1} + b^{y_1}$.  If $z_1>1$, we determine whether $a^{x_1} + b^{y_1}$ is a $z_1$ power of an integer.  Almost always, $a^{x_1} + b^{y_1}$ is not a $z_1$ power; if it is, we know $c$. In either case, if there is a solution, we know the value of $c$.

We use (1.2) with this value of $c$ to get a bound $M_{z_2}$ on $z_2$.  For each $z_2< M_{z_2}$, either $c^{z_2}/2 < a^{x_2} < c^{z_2}$ or $c^{z_2}/2 < b^{y_2} < c^{z_2}$.  

Suppose $c^{z_2}/2 < a^{x_2} < c^{z_2}$.  Then we obtain a very tight bound on $x_2$:  
$$z_2 \frac{\log(c)}{\log(a)} - \frac{\log(2)}{\log(a)} < x_2 <  z_2 \frac{\log(c)}{\log(a)}.$$
For any integer $x_2$ in this range, we calculate $c^{z_2} - a^{x_2}$ and see if this is a perfect power of $b$, in which case we have obtained the value of $y_2$ and we have found a double solution.  When $c^{z_2}/2 < b^{y_2} < c^{z_2}$ the argument is similar.  

\noindent
Case 2: $c$ is odd.  

Assume $1 < a < 3600$ is even, $1 < b < 3600$ is odd, $\gcd(a,b) =1$, and neither is a perfect power.  We have four possible parity classes of exponents: $(x,y) \equiv (0,0)$, $(1,0)$, $(0,1)$, and $(1,1) \bmod 2$.  Consider one of these four parity classes. Given $a$ and $b$, we use Theorem 2 to find a list of all possible values for $z_1$ for this parity class.  

Consider $z_1$ from this list of possible $z_1$ values.  We use Theorem 2 to find the list of possible $z_2$ values by taking the union of possible $z$ values over all four parity classes. Assume $x_1 \le x_2$ (in Case 2 we no longer assume $z_1 \le z_2$).  We obtain a bound on $x_1$ by using (1.4) in Lemma 3 with $M_c = 10^{10}$ replacing $c$ and the maximum of the $z_2$ values replacing $z_2$.  Use $y_1 \le z_1 \log(c)/\log(b)$ to get a bound on $y_1$.  Consider each pair $(x_1, y_1)$ within these bounds in the given parity class.  Lemma 4 eliminates a significant number of $(x_1,y_1, z_1)$.  For the remaining cases, if $z_1 = 1$ then we know the value of $c = a^{x_1} + b^{y_1}$.  If $z_1>1$,  we check if $ a^{x_1} + b^{y_1}$ is a $z_1$ power of an integer $c$; almost always it will not be; if it is, we know $c$. In either case, if there is a solution, we know the value of $c$.  

Recall that Theorem 2 gave us a list of possible $z_2$ values.  If $a^{x_2} + b^{y_2} = c^{z_2}$, either $c^{z_2}/2 < a^{x_2} < c^{z_2}$ or $c^{z_2}/2 < b^{y_2} < c^{z_2}$. As above, when $c^{z_2}/2 < a^{x_2} < c^{z_2}$ we obtain very tight lower and upper bounds on $x_2$, and easily check if $c^{z_2} - a^{x_2}$ is a perfect power of $b$ for each $x_2$ within the bounds.  Similarly, when $c^{z_2}/2 < b^{y_2} < c^{z_2}$ we obtain very tight bounds on $y_2$ and easily check if $c^{z_2} - b^{y_2}$ is a perfect power of $a$.  We try each $z_2$; if no $x_2$ or $y_2$ results in a perfect power, then we have verified that there is no second solution.  

We used Sage \cite{Sage} to preprocess the $h(-P)$ values for every $P< 13 \cdot 10^6$, then ran a Python script on a high performance computing cluster for the remaining calculations. Note that $\sqrt{13 \cdot 10^6} = 3605.55$ which is why 3600 is the bound for many of our calculations.

\section{Primitive case of the Je\'smanowicz conjecture}    

The ideas used for the case of $c$ odd can be applied to the primitive case of the Je\'smanowicz conjecture. Le et al \cite{LSS1} summarize a large number of results on the conjecture.  To the best of this author's knowledge, no one has considered explicit lower bounds for possible solutions to the primitive case of the Je\'smanowicz conjecture.

\begin{Theorem}  
Consider a primitive Pythagorean triple $(a,b,c)$.  If $a \le 10^6$ or if $b \le 10^6$, then the primitive case of the Je\'smanowicz conjecture holds, that is, the only solution to $a^x + b^y = c^z$ is $(x,y,z)=(2,2,2)$.
\end{Theorem}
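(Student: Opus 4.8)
The plan is to adapt the $c$ odd algorithm of Section 2 (Case 2), exploiting the fact that for a Pythagorean triple $c$ is determined by the two legs and one solution is already in hand. First I would record the standard parametrization of a primitive Pythagorean triple: writing the legs as $m^2-n^2$ and $2mn$ and the hypotenuse as $c=m^2+n^2$ with $\gcd(m,n)=1$, $m>n\ge 1$, and $m\not\equiv n\pmod 2$, one has $c$ odd, the even leg divisible by $4$, and the identity $(m+ni)^2=(m^2-n^2)+2mni$ in $\intZ[i]$. A short computation shows that requiring either leg to be at most $10^6$ forces $m\le 10^6$: if the odd leg $(m-n)(m+n)\le 10^6$ then $m+n\le 10^6$, while if the even leg $2mn\le 10^6$ then $m\le mn\le 5\cdot 10^5$. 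Hence there are only finitely many triples to treat, for each of which $a$, $b$, $c$ are known explicitly, and $(2,2,2)$ is automatically a solution coming from $a^2+b^2=c^2$.

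Since $c$ is odd, for any solution I would apply Scott's Theorem 2 separately in the four parity classes of $(x,y)\bmod 2$. In each class the squarefree part $P$ of $a^xb^y$ is forced, and (1.6) yields a finite list of admissible $z$, namely the divisors of $\frac{3^{u+v}}{2}h(-P)\,t_k$; this list is independent of which ideal factorization $\mathfrak c\,\overline{\mathfrak c}=[c]$ the solution is associated with. The known solution lies in the even–even class, where $a^xb^y$ is a square, so $P=1$, the field is $\ratQ(i)$, $h(-1)=1$, and the list is short; the identity above exhibits $(2,2,2)$ as the solution attached to $\mathfrak c=[m+ni]$. For each candidate $z$ in every class I would then invoke the size estimate $\max(a^x,b^y)\in(c^z/2,c^z)$, which pins the dominant exponent to an interval shorter than $1$, compute $c^z$ minus the dominant term, and test whether the remainder is a perfect power of the other base. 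In parallel the $2$-adic inequality (1.4) of Lemma 3, applied with the even leg playing the role of $a$ and with $(2,2,2)$ as one of the two solutions, caps the smaller even-leg exponent, so each class reduces to finitely many explicit checks.

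The hard part is that, unlike Theorem 1 where $a,b<3600$ keep $P\le ab<1.3\cdot 10^7$ inside the precomputed table of class numbers, here only one leg is bounded by $10^6$; the complementary leg can be as large as $\sim 5\cdot 10^{11}$ (for instance when $m-n=1$ and $m+n$ is near $10^6$, giving a small odd leg and a huge even leg). For the two parity classes in which that large leg occurs to an odd power, $P$ is its squarefree part and can be of size $\sim 10^{12}$, far beyond any precomputed range, so $h(-P)$ must be produced on demand. I therefore expect the bulk of the work to be organizing these individual large discriminant class-number computations (PARI/Sage handles discriminants of this size in well under a second each, but there are many triples) together with the accompanying parity bookkeeping. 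Wherever possible I would discard the offending classes in advance by congruence arguments: reducing $a^x+b^y=c^z$ modulo a suitable power of $2$ (using that the even leg is divisible by $4$, together with $a^2+b^2=c^2$ to high $2$-adic precision) and modulo the bounded leg constrains the parities of $x$, $y$, $z$ and frequently rules out an odd exponent on the large leg outright. Once $z$ is bounded in every surviving class, the size-trick verification that no solution other than $(2,2,2)$ appears—and, since a Pythagorean triple satisfies $a^2+b^2=c^2$, none of the exceptional triples of Conjecture 1.1 can arise—is routine.
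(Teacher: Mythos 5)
Your proposal follows essentially the same route as the paper: parametrize the primitive triple, reduce to a finite list of parameter pairs, bound $z$ in each parity class of $(x,y)$ via Scott's Theorem 2 (using that $c$ is odd), and finish with the tight-interval check that $c^z$ minus the dominant term is a perfect power of the other base. The only substantive difference is that the paper adds efficiency devices you omit --- Han--Yuan to discard nearly half the $(f,g)$, and Dem'janenko ($g \ne f-1$), Lu ($g=1$) and Terai ($g=2$) to force $f \le 166668$ in the $a \le 10^6$ case and $g \ge 3$ in the $b \le 10^6$ case --- which shrink the enumeration (and the size of the class numbers $h(-P)$ that must be computed) but do not change the logic.
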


\begin{proof} 
Without loss of generality, let $a= f^2 - g^2$, $b = 2fg$, and $c = f^2 + g^2$ for positive relatively prime integers $f$ and $g$ of opposite parities.  

We can use a theorem of Han and Yuan \cite{HY} to eliminate almost half the possible $(f,g)$ cases. They showed that if $fg \equiv 2 \bmod 4$ and if $f+g$ has a prime divisor $p \not\equiv 1 \bmod 16$, then the primitive case of the Je\'smanowicz conjecture holds. 

We first consider the cases with $g<f \le 1000$.  We use Scott's Theorem 2 above to obtain a list of possible $z_2$ values.  For each $z_2$, we have the bounds $ x_2 \le z_2 \log(c)/\log(a)$ and $y_2 \le z_2 \log(c)/\log(b)$; as in Case 2 ($c$ odd) of the previous section, we obtain very tight bounds on possible values of either $x_2$ or $y_2$ and verify that there is no second solution. 

Now we consider the case $a \le 10^6$ where $a = f^2 - g^2$ with $f>1000$.  Demjanenko \cite{D} showed that there are no solutions when $g = f-1$, hence $g \le f-3$.  Since $a = f^2 - g^2 \le 10^6$, $\sqrt{f^2 - 10^6} \le g \le f-3$ gives us good bounds on the value of $g$ for $f>1001$, and also implies $f \le 166668$.  As before, Theorem 2 provides a list of possible $z_2$ values.  For each $z_2$, we have the bounds $ x_2 \le z_2 \log(c)/\log(a)$ and $y_2 \le z_2 \log(c)/\log(b)$; as in Case 2 ($c$ odd) of the previous section, we obtain very tight bounds on possible values of either $x_2$ or $y_2$ and verify that there is no second solution. 

Lastly, we consider the case $b \le 10^6$ where $b = 2 fg$ with $f>1000$.  Lu \cite{Lu} showed that the primitive case of the conjecture holds when $g=1$ and Terai \cite{T} showed it holds when $g=2$, so we can assume $g \ge 3$.  If $b = 2fg \le 10^6$ then $ 3 \le g \le  500000/f$ and $f \le  500000/3$ give us reasonable bounds on the possible $(f,g)$ pairs.  Once again, Theorem 2 provides a list of possible $z_2$ values.  For each $z_2$, we have the bounds $ x_2 \le z_2 \log(c)/\log(a)$ and $y_2 \le z_2 \log(c)/\log(b)$; as in Case 2 ($c$ odd) of the previous section, we obtain very tight bounds on possible values of either $x_2$ or $y_2$ and verify that there is no second solution. 
\end{proof}

\section{Small values of $a$ and $b$}  

When $c$ is even and $\max(a,b) < 10$, Theorem 7.2 of Bennett and Billerey \cite{BeBi} shows that (1.1) has no double solutions other than those listed in the conjecture. (They allow $\gcd(a,b)>1$ but we will require $\gcd(a,b)=1$.) We can apply Scott's Theorem 2 above to get a  similar result when $c$ is odd.

\begin{Theorem} 
Consider $2 \le a,b \le 10$ with $\gcd(a,b)=1$.  For each such pair $\{a,b\}$, the equation $a^x + b^y = c^z$ has at most one solution $(x,y,z)$ except for $(a,b,c)$ listed in Conjecture 1.   
\end{Theorem}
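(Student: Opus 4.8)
The plan is to split on the parity of $c$ and reduce the even case to a cited result, leaving the odd case as the real content. If $c$ is even then $c^z$ is even, and since $\gcd(a,b)=1$ the bases cannot both be even; were exactly one even, $a^x+b^y$ would be odd, contradicting $c^z$ even. Hence both $a$ and $b$ are odd, so $a,b\in\{3,5,7,9\}$ and $\max(a,b)\le 9<10$. Thus Theorem 7.2 of Bennett and Billerey applies for every even $c$ and finishes this case with no further work (the perfect power $9$ is permitted there).

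For $c$ odd I would take, without loss of generality, $a$ even and $b$ odd (the only way $a^x+b^y$ can be odd), and fix one of the finitely many admissible pairs $(a,b)$ with $2\le a,b\le 10$, $\gcd(a,b)=1$. For each of the four parity classes of $(x,y)$ I determine the squarefree data $P,Q$ with $PQ=\rad(ab)$ and $a^xb^y/P$ a square, and apply Scott's Theorem 2 to obtain, via the divisibility (1.6), an explicit finite list $L(a,b)$ of possible values of $z$ — crucially independent of $c$. Supposing toward a contradiction that some odd $c$ admits two solutions $(x_1,y_1,z_1)\ne(x_2,y_2,z_2)$, labelled so that $x_1\le x_2$, we have $z_1,z_2\in L(a,b)$, so in particular $z_1,z_2\le Z$ for an explicit $Z=Z(a,b)$.

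Next I would separate according to which summand dominates the smaller solution. If $a^{x_1}\ge b^{y_1}$, then $a^{x_1}\ge c^{z_1}/2\ge c/2$, so $x_1\ge \log(c/2)/\log a$; feeding this into Lemma 3 gives
\[
\frac{\gamma}{\log a}\log(c/2)\le \gamma x_1<\delta+\frac{1}{\log 2}\log\!\left(\frac{\log c}{\log b}\,z_1z_2\right)\le \delta+\frac{1}{\log 2}\log\!\left(\frac{Z^2\log c}{\log b}\right),
\]
whose left side grows linearly in $\log c$ while the right side grows like $\log\log c$. This yields an absolute bound $c<C_0(a,b)$, after which the finite search described in Case 2 of Section 2 (tight bounds on the large exponent for each $z_2\in L(a,b)$, checking whether $c^{z_2}-a^{x_2}$ or $c^{z_2}-b^{y_2}$ is a perfect power) disposes of all remaining $c$.

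The hard case, and the main obstacle, is when the smaller solution has the odd base dominant, $b^{y_1}>a^{x_1}$: here Lemma 3 only gives $x_1=O(\log\log c)$ and leaves $c$ (hence $y_1,y_2$) unbounded, so a pure size/$2$-adic argument fails. Here I would lean on the ideal-theoretic content of Scott's Theorem 2, namely Observations 1 and 2: for each fixed parity class and each ideal factorization $\mathfrak{c}\overline{\mathfrak{c}}=[c]$ there is at most one solution, with a second occurring only in the explicit $v=1$ configuration. For the small bases at hand $\rad(ab)$ has very few prime factors and $h(-P)$ is tiny, so the lists $L(a,b)$ are short and the admissible $(P,\mathfrak{c}\overline{\mathfrak{c}})$ data are severely constrained; a second dominant solution then forces one of finitely many Pillai-type equations $a^{x}=c^{z}-b^{y}$ with $z\in L(a,b)$, each resolved directly or recognized as one of the exceptions in Conjecture 1 (for instance $(2,3,5)$, $(2,3,11)$, $(2,5,3)$, $(2,7,3)$). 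Finally, the pairs in which a base is a perfect power ($4,8,9$) I would handle by the substitutions $4^x=2^{2x}$, $8^x=2^{3x}$, $9^y=3^{2y}$, so that their solutions sit inside the already-analyzed parity-restricted families for $(2,b)$ and $(a,3)$. Making the $b$-dominant, unbounded-$c$ analysis airtight — turning Scott's uniqueness into a complete, finite elimination of second solutions — is the step I expect to require the most care.
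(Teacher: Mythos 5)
Your reduction of the even-$c$ case to Bennett--Billerey and your use of Scott's Theorem 2 to produce a finite, $c$-independent list of admissible $z$ values for each parity class both match the paper. The divergence --- and the genuine gap --- is in what you do with that list. Your plan is to bound $c$ and then search, but as you yourself concede, the bound from Lemma 3 only materializes when the \emph{even} base dominates the smaller solution ($a^{x_1}\ge c^{z_1}/2$); when $b^{y_1}>a^{x_1}$ the inequality (1.4) caps $x_1$ at $O(\log\log c)$ and says nothing about $c$, $y_1$, or $y_2$. For that case you appeal to Observations 1 and 2 inside the proof of Theorem 2, but those give uniqueness of the solution \emph{associated with a fixed ideal factorization $\mathfrak{c}\overline{\mathfrak{c}}$ in a fixed field $\ratQ(\sqrt{-P})$}; two solutions of $a^x+b^y=c^z$ generally lie in different parity classes of $(x,y)$, hence have different $P$'s and live in different quadratic fields, so there is no single $\mathfrak{c}\overline{\mathfrak{c}}$ forcing them to coincide. ``Finitely many Pillai-type equations, each resolved directly'' is exactly the content that still has to be supplied, and it is the bulk of the theorem.

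The paper closes this gap by a different mechanism that never bounds $c$. First, the computed $z$-lists are all $\{2,3\}$-smooth, and the case $z_1=z_2$ is outsourced to the Pillai analysis of Section 5 (Theorem 5, proved by the bootstrapping method), which your proposal omits entirely; this forces $z_2>1$, hence $2\mid z_2$ or $3\mid z_2$. If $2\mid z_2$, a congruence pins down the parity of one exponent and the factorization $b^{y_2}=(c^{z_2/2}-a^{x_2/2})(c^{z_2/2}+a^{x_2/2})$ converts the second solution into a two-term exponential equation in $a$ and $b$ alone (e.g.\ $1=b^{y_2}-2a^{x_2/2}$), killed modulo $8$ or similar for \emph{all} $c$ simultaneously; if $3\mid z_2$, an analogous difference-of-cubes or $\ratQ(\sqrt{-2})$-factorization argument applies; and the residual pair $(2,7)$ needs the separate congruence cascade of Lemma 5 (moduli $8, 9, 7, 13, 73$). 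You would need to carry out this elimination (or an equivalent one) for every pair and every $z_2$ in your lists before the theorem is proved; as written, the unbounded-$c$, odd-base-dominant case is asserted rather than established.
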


\begin{proof}
As noted above, when $c$ is even, \cite{BeBi} shows the desired result.  When $c$ is odd, we can apply Theorem 2.  Suppose we have two solutions $a^{x_1} + b^{y_1} = c^{z_1}$ and $a^{x_2} + b^{y_2} = c^{z_2}$ with $z_1 \le z_2$.  

When applying Theorem 2, the $z$ values depend on the $P$ and $Q$ which for $a^x + b^y = c^z$ depend on the parities of $x$ and $y$.  Here is a table listing, for a given $\{a,b\}$, the set of $z$ values possible for each of the four parity classes of $(x,y)$.  

\begin{tabular}{l|l|l|l|l}
\{a, b\} & (0, 0) & (0, 1) & (1, 0) & (1, 1) \\
\hline 
\{2, 3\} &  \{1, 2\} & \{1\} & \{1\} & \{1\} \\
\{2, 5\} & \{1, 2\} & \{1, 2\} & \{1, 3\} & \{1\}  \\
\{2, 7\} & \{1, 2, 4\} & \{1\} & \{1, 2, 4\} & \{1, 2\} \\
\{6, 5\} & \{1, 2\} & \{1, 2\} & \{1, 2, 4\} & \{1\}  \\
\{6, 7\} & \{1, 2, 4\}  & \{1, 2\} & \{1, 2, 3, 6\} & \{1\}  \\
\{10, 3\} & \{1, 2\} & \{1, 3\} & \{1, 2, 4\} & \{1\}  \\
\{10, 7\} & \{1, 2, 4\} & \{1, 3\} & \{1, 2, 3, 6\} & \{1\}  \\
\end{tabular} 

Note that the $z$ values are divisible by no primes other than 2 or 3. In the next section we investigate the Pillai equation and show that for these $(a,b)$ pairs (and many others), we have no double solutions with $z_1=z_2$ except those given in Conjecture 1.  Thus, $z_2>1$ so either $2 \mid z_2$ or $3 \mid z_2$.  

Suppose $2 \mid z_2$.  The $(0,0)$ parity class leads to a Pythagorean triple and can be easily handled.  (Such an analysis completes the $(2,3)$ case.) We will handle the $(2,7)$ case separately below.  For the other pairs we have $(x_2, y_2) \equiv (0,1) \bmod 2$ or $(1,0) \bmod 2$.  

If $x_2$ is even then $y_2$ is odd.  We obtain $b^{y_2} = c^{z_2} - a^{x_2} = (c^{z_2/2} - a^{x_2/2})(c^{z_2/2} + a^{x_2/2})$.  Since $b$ is an odd prime, one can derive the equation $1 = b^{y_2} - 2 a^{x_2/2}$. If $x_2 \ge 4$ then we can view this equation modulo 8 to obtain $y_2$ even, a contradiction.

If $y_2$ is even then $x_2$ is odd (and here we only need to consider $a=6$ or 10). In the same way, we can derive two possible equations:  $1 = a^{x_2}/4 - b^{y_2/2}$ or $ 2^{x_2-2} + b^{y_2/2} = A^{x_2}$ where $a = 2 A$.  Elementary considerations lead to the exceptional cases of the theorem and eliminate all other possibilities. (Side Remark: When we view these equations modulo 4 or modulo 8, we need $x_2 \ge 3$ or 4.  Considering smaller $x_2$ leads to the interesting equations $6^3 + 5^4 = 29^2$ and $10^2 + 3^5 = 7^3$.)  

Suppose $3 \mid z_2$.  For $a=2$, $b=5$, the parity class has $x_2$ odd and $y_2$ even, say $x_2 = 2u+1$ and $y_2 = 2v$.  Now $2^{2u+1} + 5^{2v} = ( 5^v + 2^u \sqrt{-2})(5^v - 2^u \sqrt{-2})$. Since this field is a principal ideal domain, $c = (r+s \sqrt{-2})(r-s \sqrt{-2})$; analyzing the quadratic integer factorizations of $2^{2u+1} + 5^{2v} = c^3$ we can reach a contradiction.  For $(6,7)$, $(10, 3)$, and $(10,7)$, we can use similar ideas, or we can show that one of $x_2$ or $y_2$ is divisible by 3, so we get a power equals a difference of cubes, and with further elementary analysis we show this is not possible.  
\end{proof}     
   
We treat the $(2,7)$ case separately since it is the only case associated to the parity class $(x,y) \equiv (1,1) \bmod 2$ with $z_2>1$, plus this case exemplifies the type of elementary steps used to prove the above cases.       

\begin{Lemma} 
For a given positive integer $c$, there is at most one solution in positive integers $(x,y,z)$ to the equation
$$ 7^x + 2^y = c^z, \eqno{(4.1)}$$
except when $c=3$ which gives the two solutions $(x,y,z) = (1,1,2)$ and $(2,5,4)$.  ($c=9$ gives the equivalent solutions.)
\end{Lemma}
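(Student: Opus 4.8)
The plan is to use Scott's Theorem 2 to cap the exponent $z$ and then finish with elementary factorizations. First I would record that $c$ is odd and coprime to $7$ (if $7 \mid c$ then $7 \mid 2^y$, impossible), and apply Theorem 2 to the pair $\{2,7\}$: taking the union over the four parity classes of $(x,y)$ shows that every solution of $7^x + 2^y = c^z$ has $z \in \{1,2,4\}$, and moreover that $z = 4$ can occur only when the exponent of $7$ is even. This bound on $z$ is exactly what lets the argument run for all $c$ at once, independent of size. The equal-exponent case $z_1 = z_2$ (in particular two distinct representations with $z = 1$) is excluded by the Pillai-equation analysis of the next section, which gives no double solutions with $z_1 = z_2$ for these bases; so I may assume two solutions with $z_1 < z_2$, forcing $z_2 \in \{2,4\}$.

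Next I would treat $z_2 = 4$. By Theorem 2 the exponent of $7$ is even, say $x_2 = 2s$, so $2^{y_2} = c^4 - 7^{2s} = (c^2 - 7^s)(c^2 + 7^s)$. Both factors are even and, since $\gcd(c,7) = 1$, their gcd is exactly $2$; as the product is a power of $2$, the smaller factor must equal $2$. This yields $c^2 - 7^s = 2$ together with $7^s + 1 = 2^t$ for some $t$. A one-line argument modulo $16$ shows $7^s + 1 = 2^t$ forces $s = 1$, whence $c^2 = 9$, $c = 3$, and the solution is $(x,y,z) = (2,5,4)$. The case $z_2 = 2$ splits on the parity of $x_2$. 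If $x_2$ is even, the identical factorization $2^{y_2} = (c - 7^s)(c + 7^s)$ again gives $c - 7^s = 2$ and $7^s + 1 = 2^t$, so $s = 1$, $c = 9$, and the solution $(2,5,2)$. If $x_2$ is odd, reducing modulo $4$ (using $7^{x_2} \equiv 3$ against $c^2 \equiv 1$) forces $y_2 = 1$, leaving the single equation $c^2 = 7^{x_2} + 2$ with $x_2$ odd.

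Assembling the cases, a second (larger-$z$) solution can occur only for $c \in \{3,9\}$: one checks directly that $c = 3$ carries exactly $(1,1,2)$ and $(2,5,4)$, and $c = 9$ exactly $(1,1,1)$ and $(2,5,2)$ (the ``equivalent'' solutions), while for every other $c$ there is at most the unique $z = 1$ solution permitted by the equal-$z$ result. This gives the claimed statement.

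The hard part will be the leftover equation $c^2 = 7^x + 2$ with $x$ odd, for which I must prove the only solution is $x = 1$ (that is, $c = 3$); congruence conditions alone cannot isolate a single value of $x$, since they only ever pin $x$ down to a residue class. I would resolve it by factoring $c^2 - 2 = (c - \sqrt{2})(c + \sqrt{2}) = 7^x$ in $\mathbb{Z}[\sqrt{2}]$, a principal ideal domain with fundamental unit $1 + \sqrt{2}$ in which $7 = (3 + \sqrt{2})(3 - \sqrt{2})$. The two factors are coprime, so all primes above $7$ concentrate on one side and $c + \sqrt{2} = \pm (1 + \sqrt{2})^k (3 + \sqrt{2})^x$ with $k$ even (to match norms). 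Requiring the coefficient of $\sqrt{2}$ to equal $1$ is then a rigid constraint: since the two archimedean absolute values of $c \pm \sqrt{2}$ are both near $7^{x/2}$, the unit exponent $k$ is pinned to within $O(1)$ of a fixed negative multiple of $x$, and a growth estimate on the resulting integer recurrence rules out $x \ge 3$, leaving $x = 1$. The remaining ingredients — Theorem 2 to cap $z$ at $4$ and to exclude $z = 4$ with the exponent of $7$ odd, and the elementary fact $7^s + 1 = 2^t \Rightarrow s = 1$ — are routine.
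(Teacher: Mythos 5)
Your reduction of the problem is sound and largely parallels the paper: you use Theorem 2 to force $z\in\{1,2,4\}$, you defer the case $z_1=z_2$ to the Pillai analysis, and in the cases where the exponent of $7$ is even you factor $2^{y_2}=(c^{z_2/2}-7^{x_2/2})(c^{z_2/2}+7^{x_2/2})$ to land on $7^{s}=2^{t}-1$, which correctly yields the exceptional solutions $(2,5,4)$ for $c=3$ and $(2,5,2)$ for $c=9$. This matches the paper's treatment of the ``both $x$ and $z$ even'' case. The divergence, and the gap, is in the remaining case $z_2=2$, $x_2$ odd, where (after $y_2=1$ from a mod $4$ or mod $8$ reduction) you are left with $c^2=7^{x_2}+2$. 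You propose to solve this equation outright by factoring $c^2-2=7^{x}$ in $\mathbb{Z}[\sqrt{2}]$ and invoking ``a growth estimate on the resulting integer recurrence.'' That is not a proof. Writing $c+\sqrt{2}=\pm(1+\sqrt{2})^{2m}(3\pm\sqrt{2})^{x}$, the size comparison $|c\pm\sqrt{2}|\approx 7^{x/2}$ only pins the unit exponent $m$ to within $O(1)$ of a fixed irrational multiple of $x$; it says nothing about whether the coefficient of $\sqrt{2}$ in the surviving candidate equals exactly $1$. Equivalently, setting $x=2n+1$ the problem becomes $c^2-7d^2=2$ with $d=7^{n}$, i.e.\ locating pure powers of $7$ in a Pell-type recurrence ($d_{k+1}=16d_k-d_{k-1}$); congruences modulo powers of $7$ do not kill this (note $10^2\equiv 2\bmod 49$, so $2$ is a square modulo every power of $7$), and ruling out $d_k=7^{n}$ for $n\ge 1$ requires either a genuine Lucas-sequence/primitive-divisor argument or linear forms in logarithms. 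None of that is supplied, and growth estimates alone cannot supply it.

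The paper sidesteps this entirely, and the trick is worth absorbing: it never solves $c^2=7^{x}+2$ as a standalone equation. Instead it uses the \emph{presence of the second solution}. Since $z_1=1$, the same $c$ also satisfies $c=7^{x_1}+2^{y_1}$, and the two relations are played against each other: mod $9$ gives $x_2\equiv 1\bmod 6$; mod $7$ gives $y_1\equiv 5\bmod 6$; mod $13$ gives $x_2\equiv 1\bmod{12}$ together with $2\mid x_1$ and $3\nmid x_1$; and mod $73$ the conditions $c^{2}\equiv 7^{x_2}+2$ with $x_2\equiv 1\bmod{24}$ force $c\equiv\pm 3\bmod{73}$, whence $7^{x_1}=c-2^{y_1}$ must lie in a short list of residues mod $73$, each of which is either not a power of $7$ at all or contradicts the mod $13$ information on $x_1$. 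Your instinct that ``congruence conditions alone cannot isolate a single value of $x$'' is exactly what the two-solution hypothesis circumvents: one does not need to isolate $x_2$, only to show the congruence classes forced on $c$ by the $z=2$ solution are incompatible with those forced by the $z=1$ solution. To repair your write-up, either replace the $\mathbb{Z}[\sqrt{2}]$ sketch with this joint congruence argument, or supply a complete proof that $c^2=7^{x}+2$ has only the solution $(c,x)=(3,1)$ in odd $x$ --- the latter is a harder theorem than the lemma you are trying to prove.
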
 

\begin{proof} 
Assume (1) has two solutions as follows:
$$ 7^{x_1} + 2^{y_1} = c^{z_1}$$
and 
$$7^{x_2} + 2^{y_2} =  c^{z_2}.$$ 
By \cite{ScSt8} there is no third solution.  We cannot have $z_1 = z_2$, since then there exists a nonzero integer $d$ such that the equation $7^x - 2^y = d$ has two solutions $(x,y)$; $d<0$ is impossible by Theorem 6 of \cite{Sc}, and $d>0$ is impossible by Theorem 3 of \cite{Sc} since the parity of $y$ is determined by  consideration modulo 3.  

So by Theorem 2, we find that $z_2$ must be a power of 2.  So consideration modulo 3 gives
$$ 3 \mid c, 2 \nmid y_1 y_2. \eqno{(4.2)}$$

For any solution in which both $x$ and $z$ are even, we can subtract $7^x$ from both sides of (1) and factor the difference of squares to obtain $7^{x/2} = 2^{y-2} - 1$, which gives the exceptional case in the lemma.  

Removing this exceptional case from consideration, we find that we must have $x_2$ odd.  Recalling (2) and using Theorem 2 of this paper, we obtain
$$ 2 \nmid x_2 y_1 y_2, z_1 = 1, z_2 = 2. \eqno{(4.3)}$$

We now obtain results using several moduli:  

Modulo 8:  $y_2=1$.

Modulo 9:  $x_2 \equiv 1 \bmod 6$.

Modulo 7:  $y_1 \equiv 5 \bmod 6$. 

Modulo 13:  $x_2 \equiv 1 \bmod 12$, $2 \mid x_1$, $3 \nmid x_1$.

Modulo 73:  $x_2 \equiv 1 \bmod 24$, $c^{z_1} \equiv \pm 3 \bmod 73$, $2^{y_1} \equiv 4$, $32$, $37 \bmod 73$, $7^{x_1} \equiv -1$, $-7$, $-29$, $33$, $-34$, or $-35 \bmod 73$.  

There are no positive integers $n$ for which $7^n \equiv -29$, $33$, $-34$, or $-35 \bmod 73$. $7^{x_1} \equiv -1 \bmod 73$ requires $3 \mid x_1$, contradicting modulo 13.  $7^{x_1} \equiv -7 \bmod 73$ requires $2 \nmid x_1$, again contradicting modulo 13.      
\end{proof}
 
Remark:  When $c$ is odd, we could use the Bennett and Billerey tables  2-3-5-7 and 2-3-$p$ (with prime $p<100$) referenced in \cite{BeBi} to find more pairs $(a,b)$ satisfying Conjecture 1 independent of $c$.   These tables list every possible solution to the $S$-unit equations $A+B = C^2$ and $A+B = C^3$ where $\rad(AB) \mid 2 \cdot 3 \cdot 5 \cdot 7$ or $\rad(AB) \mid 2 \cdot 3 \cdot p$ for a prime $p<100$.  

For any pair $(a,b)$ with $a$ even and $b$ odd, $\rad(ab) \mid 2 \cdot 3 \cdot 5 \cdot 7$ or $\rad(ab) \mid 2 \cdot 3 \cdot p$ for a prime $p<100$,  $a$ and $b$ not perfect powers, $\gcd(a,b)=1$, and $2 \le a,b < 3600$, we use Theorem 2 to check if the possible $z$ values are only divisible by 2 or 3. There are 856 such pairs. By Theorem 5 below, if there is a double solution to (1.1), then we cannot have $z_1=z_2$;  either $2 \mid z_2$ or $3 \mid z_2$ so any potential double solution would appear in the Bennett-Billerey tables referenced in \cite{BeBi}.  For each such solution in the table we would know $z_2$.  If there is a solution $a^{x_2}+b^{y_2} = c^{z_2}$ with 2 or $3 \mid z_2$, quick calculations show that no $z_1<z_2$ gives a solution to $a^{x_1} + b^{y_1} = c^{z_1}$ other than the listed exceptions in Conjecture 1.  So we obtain 856 pairs $(a,b)$ that satisfy Conjecture 1 independent of $c$. Here are the pairs with $a$ and $b$ less than or equal to 20:  $(a,b) = (2, 3)$, (2, 5), (2, 7), (2, 15), (2, 17), (6, 5), (6, 7), (6, 17), (10, 3), (10, 7), (12, 5), (12, 7), (12, 11), (12, 13), (12, 17), (14, 3), (14, 5), (14, 15), (18, 5), (18, 7), (18, 17), (20, 3), or (20, 7).

\section{Pillai equation}  

In the previous section we postponed dealing with the case $z_1=z_2$.  Here we consider $z_1=z_2$.  If $a^{x_1} + b^{y_1} = a^{x_2}  + b^{y_2}$ then we obtain either $a^{x_1} - b^{y_2} = a^{x_2} - b^{y_1} = r$ or $b^{y_1}-a^{x_2} = b^{y_2} - a^{x_1} = r$ for some positive integer $r<c^{z_1}$. In this section we handle these Pillai equations.  Bennett \cite{Be} conjectures that $a^x - b^y = r$ at most one solution in positive integers $(x,y)$ except for an explicit list of cases with two solutions.  In the previous section we only needed to consider $2 \le a,b \le 10$ but here we will show that Bennett's conjecture holds when $\gcd(a,b)=1$ for a much larger range of values of $a$ and $b$.

\begin{Theorem} 
Let $2 \le a \le 3600$, $2 \le b \le 3600$, $\gcd(a,b)=1$, and $r$ a positive integer. Then $a^x - b^y = r$ has at most one solution in positive integers $x$ and $y$, except when $(a,b,r) = (3, 2,1)$, $(2,3,5)$, $(2,3,13)$, $(2,5,3)$, $(13,3,10)$, $(91, 2, 89)$.   
\end{Theorem}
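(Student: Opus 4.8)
The plan is to assume that, for some positive integer $r$ and coprime $a,b$ in the stated range, the equation $a^x - b^y = r$ has two distinct solutions $(x_1,y_1)$ and $(x_2,y_2)$, and to force a contradiction unless $(a,b,r)$ is one of the six listed triples. Taking $x_1<x_2$ without loss of generality, the relation $a^{x_2}-a^{x_1}=b^{y_2}-b^{y_1}>0$ forces $y_1<y_2$, and with $d_x=x_2-x_1$, $d_y=y_2-y_1$ we get the single identity $a^{x_1}(a^{d_x}-1)=b^{y_1}(b^{d_y}-1)$. Since $\gcd(a,b)=1$, this yields the two divisibility facts $b^{y_1}\mid a^{d_x}-1$ and $a^{x_1}\mid b^{d_y}-1$; in particular $\ord_{a^{x_1}}(b)\mid d_y$ and $\ord_{b^{y_1}}(a)\mid d_x$, together with $b^{y_1}<a^{d_x}$ and $a^{x_1}<b^{d_y}$. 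I would then split according to the parities of the exponents and according to which (if either) of $a,b$ is even.

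First I would bound the smaller solution $2$-adically, exactly in the spirit of Lemma 3. As $\gcd(a,b)=1$, at most one base is even; say $a$ is even with $\nu_2(a)=\gamma$. Comparing $2$-adic valuations in $a^{x_1}(a^{d_x}-1)=b^{y_1}(b^{d_y}-1)$ gives $\gamma x_1=\nu_2(b^{d_y}-1)$, and the lifting-the-exponent identity bounds the right-hand side by $\nu_2(b^2-1)-1+\nu_2(d_y)$, or by $\nu_2(b-1)$ when $d_y$ is odd. Hence $x_1$ is forced to be small, of size $O(\log d_y)=O(\log y_2)$, and the symmetric statement bounds $y_1$ when $b$ is even. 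This pins down one of the two small exponents up to a logarithmic dependence on the larger solution.

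The real work is to bound the larger solution so that the remaining search is finite, and here I would invoke Scott's quadratic-field machinery, the engine behind Theorem 2, together with its companion results for the difference equation. After fixing the parities of $x$ and $y$ and factoring the appropriate equation in the imaginary quadratic field $\ratQ(\sqrt{-P})$, where $P$ is the squarefree part of a bounded multiple of $ab$, the admissible exponents are constrained to divide a quantity of the shape $\frac{3^{u+v}}{2}h(-P)\,t_k$, exactly as in (1.6). Since $P\le ab<3600^2<13\cdot 10^6$, every class number $h(-P)$ is available from the precomputation, and each parity class contributes only a finite, explicitly computable list of candidate exponents; this is precisely why the bound $3600$ appears. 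Lemma 4 removes further exponent combinations arising from generalized Fermat equations.

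With finite candidate lists for the exponents of both solutions in hand, the verification becomes computational and mirrors the ``$c$ odd'' algorithm of Section 2: for each admissible smaller solution one reads off $r=a^{x_1}-b^{y_1}$, and for each admissible larger exponent one checks the tight window forced by $b^{y_2}=a^{x_2}-r$ (or $a^{x_2}=b^{y_2}+r$) to see whether a genuine second solution occurs, collecting exactly the six exceptional triples. I expect the main obstacle to be the both-odd case (realized, for instance, by $(a,b,r)=(13,3,10)$ with $(x,y)=(1,1)$ and $(3,7)$), where neither base is even and the clean $2$-adic bound of Lemma 3 is unavailable: there one must substitute an odd-prime valuation or a congruence modulo a small auxiliary prime to restrict the exponents before the quadratic-field step, and one must confirm that the candidate lists stay short enough for the search to terminate uniformly over all coprime pairs $a,b\le 3600$.
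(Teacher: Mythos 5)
Your setup is fine (the identity $a^{x_1}(a^{d_x}-1)=b^{y_1}(b^{d_y}-1)$, the resulting order divisibilities, and the $2$-adic bound on the smaller exponent in terms of the larger one), but the central step of your plan --- bounding the \emph{larger} solution by ``Scott's quadratic-field machinery, the engine behind Theorem 2'' --- does not apply here, and without it you never obtain the finite candidate lists on which your whole verification rests. Theorem 2 constrains the exponent $z$ in $A+B=c^z$ by factoring $(B-A+2\sqrt{-AB})(B-A-2\sqrt{-AB})=c^{2z}$ into ideals of $\ratQ(\sqrt{-AB})$; the divisor bound $\tfrac{3^{u+v}}{2}h(-P)t_k$ is a bound on $z$, the exponent of the power on the right-hand side. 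In the Pillai equation $a^x-b^y=r$ the right-hand side is an arbitrary positive integer $r$, not a perfect power, so there is no ideal equation of the form $\mathfrak{c}^{2z}$ and no exponent to which the class-number argument attaches. Moreover, even in its home setting Theorem 2 says nothing that bounds the exponents \emph{inside} $A$ and $B$ (which is what $x$ and $y$ are here); it only limits $z$ and pins down $(A,B)$ for each admissible $z$. The $13\cdot 10^6$ class-number precomputation you cite belongs to the proof of Theorem 1, Case 2, not to this theorem.

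What the paper actually uses to bound the larger solution is analytic: Mignotte's lower bound for the linear form $|x\log a - y\log b|$ (via Bennett), sharpened through Lemma 6 and Propositions 1--3 so that $r$ may be replaced by $b^{2a\log a}$, yielding the explicit bound $x_2<1194836$ for all $a,b\le 3600$ (inequality (5.11)). The contradiction is then produced not by searching candidate exponent lists but by \emph{bootstrapping}: starting from lower bounds $x_1\ge X_1$, $y_1\ge Y_1$ (obtained from Bennett's result for $r\le 100$ and from $2$-adic valuations), one alternately computes multiplicative orders $M(a^{X_1}R(a^{d_x}-1,b),b)\mid y_2-y_1$ and $M(b^{Y_1}R(b^{d_y}-1,a),a)\mid x_2-x_1$ until the forced divisor of $x_2-x_1$ exceeds $1194836$, with a separate finite check in the one case ($a=2661$, $b=3600$-range $b=20$) where the iteration stalls. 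If you want to repair your proposal you need to replace the quadratic-field step with a linear-forms-in-logarithms bound (or some other absolute upper bound on $x_2$) and then supply a mechanism, such as this order-lifting iteration, that contradicts it; your concern about the both-odd case is then moot, since neither the logarithmic bound nor the bootstrapping needs one of $a,b$ to be even.
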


Before we prove this, we state a sharpened version of Bennett's Theorem 1.3 \cite{Be} for the case $\gcd(a,b)=1$, given as Lemma 6 below.  To prove this lemma, we use three propositions, which closely parallel the treatment in \cite{Be}, but give sharper results.  We prove these propositions for the more general equation 
$$ (-1)^u a^x + (-1)^v b^y = r, u, v \in \{0,1\}  \eqno{(5.1)}  $$ 
but require $\gcd(a,b)=1$ (whereas Bennett's lemmas require $u=0$ and $v=1$ but allow $\gcd(a,b)>1$).  These three propositions, found in \cite{ScSt2b}, improve, simplify, and sharpen the corresponding results in \cite{ScSt2} (the paper \cite{ScSt2b} is an updated version of \cite{ScSt2}).

\begin{Proposition}  
Let $a>1$ and $b>1$ be relatively prime integers.  For $1 \le i \le t$, let $p_i$ be one of the $t$ distinct prime divisors of $a$.  Let $p_i^{g_i} || b^{n_i} \pm 1$, where $n_i$ is the least number such that $p_i | b^{n_i} \pm 1$  (when $p_i =2$ we choose the sign to maximize $g_i$).  

Write
$$S = \sum_i g_i \log(p_i)/ \log(a).$$

Then, if 
$$a^x | b^y \pm 1, \eqno{(5.2)}$$
where the $\pm$ sign is independent of the above, we must have 
$$a^{x-S} | y.$$
\end{Proposition}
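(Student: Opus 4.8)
The plan is to prove the divisibility one prime at a time and then reassemble the local information into the global statement. Write $a=\prod_{i=1}^{t}p_i^{e_i}$, and for each $i$ let $\nu_{p_i}$ denote the $p_i$-adic valuation. Since $\gcd(a,b)=1$, each $b$ is a unit modulo $p_i$, and the hypothesis $a^x\mid b^y\pm 1$ is exactly the statement that $x e_i\le \nu_{p_i}(b^y\pm 1)$ for every $i$. Thus the whole proposition reduces to the local bound
$$\nu_{p_i}(b^y\pm 1)\ \le\ g_i+\nu_{p_i}(y)\qquad(1\le i\le t).$$
Granting this for each $i$, we get $\nu_{p_i}(y)\ge x e_i-g_i$, hence $\prod_i p_i^{\,x e_i-g_i}\mid y$; and since $\sum_i g_i\log(p_i)=S\log(a)$ by the definition of $S$, we have $\prod_i p_i^{g_i}=a^{S}$, so this product equals $a^x/a^{S}=a^{x-S}$. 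Reading $a^{x-S}\mid y$ as the assertion $\prod_i p_i^{\,x e_i-g_i}\mid y$, this is exactly the conclusion.

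First I would prove the local bound for odd $p_i$ using the Lifting-the-Exponent Lemma (LTE). Let $d$ be the order of $b$ modulo $p_i$; note $p_i\nmid d$ since $d\mid p_i-1$. Because $x e_i\ge 1$ forces $p_i\mid b^y\pm 1$, either $b^y\equiv 1\pmod{p_i}$ (so $d\mid y$) or $b^y\equiv -1\pmod{p_i}$ (so $d$ is even and $d/2\mid y$). When $d$ is even one checks $\nu_{p_i}(b^d-1)=\nu_{p_i}(b^{d/2}+1)$, so the single quantity $g_i$ serves both signs. Applying LTE to $b^y-1$ in the first case, and to $b^{2y}-1$ in the second (via $b^y+1=(b^{2y}-1)/(b^y-1)$ with $\nu_{p_i}(b^y-1)=0$), yields $\nu_{p_i}(b^y\pm 1)=g_i+\nu_{p_i}(y)$ in both cases, the extra term collapsing because $p_i\nmid d$. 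When the sign appearing in the hypothesis can never be attained for this prime, the statement is vacuous for that $i$.

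The main obstacle is the prime $p_i=2$, where LTE is more delicate and where the clause \emph{choose the sign to maximize $g_i$} does its work. Here $b$ is odd, so $n_i=1$ and $g_i=\max(\nu_2(b-1),\nu_2(b+1))$, while the smaller of $\nu_2(b\pm1)$ always equals $1$. I would check the four combinations of the sign in the hypothesis against the parity of $y$: for $y$ odd one has $\nu_2(b^y\pm1)=\nu_2(b\pm1)\le g_i=g_i+\nu_2(y)$; for $y$ even with sign $+$ one has $\nu_2(b^y+1)=1\le g_i$; and for $y$ even with sign $-$ the two-adic LTE gives $\nu_2(b^y-1)=\nu_2(b-1)+\nu_2(b+1)+\nu_2(y)-1$, which equals $g_i+\nu_2(y)$ precisely because $\nu_2(b-1)+\nu_2(b+1)-1=\max(\nu_2(b-1),\nu_2(b+1))=g_i$. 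Thus the maximizing choice of sign is exactly what makes $g_i$ large enough to dominate every case, which completes the local bound and hence, via the reassembly of the first paragraph, the proposition.
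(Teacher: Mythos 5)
Your proposal is correct and takes essentially the same route as the paper: both decompose $a=\prod_i p_i^{e_i}$, establish the local bound $\nu_{p_i}(b^y\pm 1)\le g_i+\nu_{p_i}(y)$ for each prime (the paper simply cites this standard lifting-the-exponent fact, including the same caveat for $p_i=2$ with $y$ odd that you handle explicitly), and multiply the resulting divisibilities $p_i^{xe_i-g_i}\mid y$ to obtain $a^{x-S}\mid y$. You merely spell out the LTE details that the paper leaves implicit.
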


\begin{proof}  Let $a = \prod_i p_i^{\alpha_i}$.  If (5.2) holds, then for each $i$, $p_i^{x \alpha_i} | b^y \pm 1$, so that $p_i^{x \alpha_i -g_i} | y$ (in the case $p_i=2$, $\alpha_i=1$, $ 2 \nmid y$ we may have $x \alpha_i < g_i$, but then $y/p_i^{x \alpha_i - g_i}$ is an integer).  Thus, $y$ is divisible by 
$$\prod_i p_i^{x \alpha_i - g_i} = a^{x-S}.$$
\end{proof}

\begin{Proposition}  
Let $a$ and $b$ be relatively prime positive integers with $a>2$, $b >1$, and $(a,b) \ne (3,2)$. Then, in the notation of Proposition 1, 
$$S < \frac{ a \log(b)}{2 \log(a)}.$$
\end{Proposition}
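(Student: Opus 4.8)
The plan is to reduce the claim to an elementary arithmetic inequality about the prime divisors of $a$ and then to isolate by hand the few genuinely tight small cases. Since $S=\sum_i g_i\log(p_i)/\log(a)$ and $\log a>0$, the assertion is equivalent to
$$\sum_i g_i\log p_i<\tfrac{a}{2}\,\log b. \eqno{(\ast)}$$
First I would pin down the size of each $n_i$. For an odd prime $p_i\mid a$ put $d_i=\ord_{p_i}(b)$: if $d_i$ is even then $b^{d_i/2}\equiv-1\pmod{p_i}$, so the least admissible exponent is $n_i=d_i/2\le(p_i-1)/2$ (with the plus sign); if $d_i$ is odd then there is no plus-sign solution and $n_i=d_i$, an odd divisor of the even number $p_i-1$, hence again $n_i\le(p_i-1)/2$. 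For $p_i=2$ (when $2\mid a$) coprimality makes $b$ odd, so $n_1=1$ and $2^{g_1}\mid b\pm1$, giving $2^{g_1}\le b+1$.

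Next I would convert each local divisibility into a size estimate. Since $p_i^{g_i}$ divides the positive integer $b^{n_i}\pm1$ we have $p_i^{g_i}\le b^{n_i}+1$, so $g_i\log p_i\le\log(b^{n_i}+1)$ (and $g_1\log2\le\log(b+1)$ for $p_i=2$). The workhorse is the elementary bound $\log(b^{n}+1)\le(n+\tfrac12)\log b$, which rearranges to $1\le b^{n}(\sqrt b-1)$ and holds for all $b\ge2,\ n\ge1$ except for $(b,n)=(2,1)$; by the previous step that lone exceptional term can occur only for $p_i=3$ together with $b=2$. Summing this estimate and using $2n_i\le p_i-1$ for odd $p_i$ and $2n_1=2$ for $p_i=2$, the inequality $(\ast)$ is reduced to
$$\sum_{\substack{p\mid a\\ p\ \text{odd}}} p+3\varepsilon<a,$$
where $\varepsilon=1$ if $2\mid a$ and $\varepsilon=0$ otherwise. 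This follows from $\sum_{p\mid a}p\le\rad(a)\le a$, and a short check shows it is strict for every $a$ except the odd primes and $a=6$.

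Finally I would dispatch the residual cases directly. For $a$ an odd prime the sum in $(\ast)$ is a single term bounded by $\log(b^{(p-1)/2}+1)$, and a one-line computation gives $b^{(p-1)/2}+1<b^{p/2}$ for all $b\ge2$ when $p\ge5$ and for $p=3$ when $b\ge4$; the unique failure at $(p,b)=(3,2)$ is exactly the excluded pair. For $a=6$ the term bounds already yield $\sum_i g_i\log p_i\le2\log(b+1)<3\log b=\tfrac{a}{2}\log b$ for every admissible $b\ge5$. When $3\mid a$, $b=2$ and $a\ne3$, the single deficient $(3,2)$-type term contributes at most $\log3-\tfrac32\log2\approx0.06$, which is comfortably smaller than the slack guaranteed by the strict arithmetic inequality above, so these cases need no separate treatment.

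I expect the main obstacle to be that $(\ast)$ has essentially no slack for the smallest values of $a$: the term-by-term estimate is genuinely lossy precisely when $a$ is a small prime, and $(3,2)$ sits exactly on the boundary where both the bound $\log(b^{n}+1)\le(n+\tfrac12)\log b$ and the arithmetic inequality degenerate to equality. The delicate point is therefore to organize the argument so that the deliberately loose general estimate suffices everywhere off a short, explicitly listed set of small configurations, and then to confirm by direct computation that $(3,2)$ is the only genuine exception.
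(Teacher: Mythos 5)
Your proof is correct, but it takes a genuinely different route from the paper's. The paper bounds the whole product at once: for odd $a$ it asserts $\prod_i p_i^{g_i} \le b^{\phi(a)/2}+1 \le b^{(a-1)/2}+1 < b^{a/2}$, for even $a>4$ it asserts $\prod_i p_i^{g_i} \le b^{\phi(a/2)} < b^{a/2}$, and it treats $a=4$ by hand; this is three sentences, but the bound of the full product by a single power of $b$ is not a termwise consequence of $p_i^{g_i}\le b^{n_i}+1$ (a product of several factors $b^{n_i}+1$ exceeds $b^{\sum n_i}+1$), so it implicitly rests on a common-exponent observation such as $\prod_i p_i^{g_i}\mid b^{L}-1$ with $L=\lcm(2n_1,\dots,2n_t)\le\phi(a)$. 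Your version instead pays for each prime separately, via $p_i^{g_i}\le b^{n_i}+1\le b^{\,n_i+1/2}$ (valid except for $(b,n_i)=(2,1)$) and $n_i\le(p_i-1)/2$, reducing everything to the arithmetic inequality $\sum_{p\mid a,\,p\ \mathrm{odd}}p+3\varepsilon<a$ with $\varepsilon$ the indicator that $a$ is even; I confirm this fails only for $a$ an odd prime and $a=6$, and that your handling of the residual cases is sound: the odd-prime case reduces to $b^{(p-1)/2}+1<b^{p/2}$ with $(p,b)=(3,2)$ the unique failure, $a=6$ forces $b\ge5$ so $2\log(b+1)<3\log b$, and the $(b,n_i)=(2,1)$ deficiency $\log 3-\tfrac32\log 2\approx 0.059$ is indeed absorbed by the integer slack, which contributes at least $\tfrac12\log 2\approx 0.347$. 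What the paper's route buys is brevity and a single clean totient bound; what yours buys is that every step is an explicit local estimate with the tight configurations isolated in a short finite list, at the cost of more bookkeeping. Both arguments ultimately rest on the same two facts, $n_i\le(p_i-1)/2$ for odd $p_i$ and $p_i^{g_i}\le b^{n_i}+1$.
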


\begin{proof}  
We assume $a>2$ and $(a,b) \ne (3,2)$.  Then if $a$ is odd, $\prod_i p_i^{g_i} \le b^{\phi(a)/2} + 1 \le b^{(a-1)/2} + 1 < b^{a/2}$, verifying Proposition 2 when $a$ is odd.  If $a>4$ is even, then $\prod_i p_i^{g_i} \le b^{\phi(a/2)} < b^{a/2}$ verifying the proposition in this case also.  Finally, when $a=4$, define $g$ so that $2^g || b \pm 1$, where the sign is chosen to maximize $g$.  Then the proposition holds unless $\frac{g \log(2)}{ \log(4) } \ge \frac{4 \log(b)}{ 2 \log(4)}$, that is, unless $2^g \ge b^2$, which is impossible.  
\end{proof}

\begin{Proposition}  
Let $a>2$, $b>1$, and $c>0$ be integers with $(a,b)=1$.  If (5.1) has two solutions $(x_1, y_1)$ and $(x_2, y_2)$, with $x_1 \le x_2$ and $y_1 \le y_2$, and if further $a^{x_1} > c/2$, then
$$x_1 < S + k,$$
where $S$ is defined as in Proposition 1, and $k = \frac{ 8.1 + \log \log(a) }{ \log(a)}$ when $a< 5346$ and $k= 1.19408$ otherwise. 
\end{Proposition}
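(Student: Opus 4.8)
The plan is to combine Proposition 1 with a single factored identity. First I would clear away the degenerate configurations: if $x_1=x_2$ then $a^{x_1}=a^{x_2}$ forces $b^{y_1}=b^{y_2}$ and the two solutions coincide, so for distinct solutions $x_1<x_2$ and $y_1<y_2$. Subtracting the two instances of (5.1) makes the common value $r$ cancel, and a sign comparison of $(-1)^u(a^{x_2}-a^{x_1})=(-1)^v(b^{y_1}-b^{y_2})$ (the factors $a^{x_2}-a^{x_1}>0$ and $b^{y_1}-b^{y_2}<0$) forces $u\ne v$, leaving
$$ a^{x_2}-a^{x_1}=b^{y_2}-b^{y_1}, \quad\text{i.e.}\quad a^{x_1}(a^{x_2-x_1}-1)=b^{y_1}(b^{y_2-y_1}-1). $$
Since $\gcd(a,b)=1$ we have $\gcd(a^{x_1},b^{y_1})=1$, so $a^{x_1}\mid b^{y_2-y_1}-1$. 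Applying Proposition 1 with $y:=y_2-y_1$ gives $a^{x_1-S}\mid y_2-y_1$, hence
$$ a^{x_1-S}\le y_2-y_1<y_2. $$
If $x_1\le S$ the claim is immediate (one checks $k>1$ throughout the stated range), so I may assume $x_1>S$ and read the last display as a lower bound on $y_2$ in terms of $x_1-S$.

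Next I would bound $y_2$ from above. Using the hypothesis $a^{x_1}>c/2$ together with $b^{y_2}<c$ (the solutions being bounded by $c$) gives $b^{y_2}<2a^{x_1}$, so that $y_2<(\log 2+x_1\log a)/\log b$. Feeding this into the previous inequality,
$$ a^{x_1-S}<\frac{\log 2+x_1\log a}{\log b}. $$
Writing $x_1=(x_1-S)+S$ and invoking Proposition 2 (valid because $a>2$ and, after excluding the pair $(a,b)=(3,2)$ that Proposition 2 must omit, $S<a\log b/(2\log a)$) bounds $S\log a$ by $\tfrac{a}{2}\log b$. Setting $t:=x_1-S$ and using $b\ge 2$, this collapses to a self-referential inequality of the form
$$ a^{t}<1+\frac{a}{2}+\frac{t\log a}{\log 2}. $$

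The last step is one-variable analysis. The function $t\mapsto a^{t}-(t\log a)/\log 2$ has derivative $\log a\,(a^{t}-1/\log 2)$ and so is increasing for $t>1$; hence once one verifies that its value at $t=k$ already exceeds $1+a/2$, no admissible $t$ can reach $k$, giving $x_1-S=t<k$. Establishing this for the stated $k$ is where the genuine work lies: for $2<a<5346$ one uses $k=(8.1+\log\log a)/\log a$, but this expression is decreasing and eventually falls below the limiting threshold, so for $a\ge 5346$ it is replaced by the constant $k=1.19408$, the two being arranged to coincide at the crossover $a=5346$. I expect the main obstacle to be precisely this calibration — pinning down the numerical constants $8.1$ and $1.19408$ and checking, via the monotonicity above, that the chosen $k$ dominates the true threshold uniformly on each of the two ranges. (If one is unwilling to assume the a priori size bound $b^{y_2}<c$ on the larger solution, then bounding $y_2$ instead requires an explicit upper estimate for solutions of the Pillai equation, as in Bennett's hypergeometric method, and that estimate — rather than the calibration — becomes the crux.) Everything upstream of the final inequality is the elementary divisibility argument of the first two paragraphs.
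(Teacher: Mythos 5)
The first half of your argument (the divisibility step) is exactly the paper's: from $a^{x_1}(a^{x_2-x_1}\pm 1)=b^{y_1}(b^{y_2-y_1}\pm 1)$ and $\gcd(a,b)=1$ one gets $b^{y_2-y_1}\equiv\pm 1\bmod a^{x_1}$, and Proposition 1 yields $y_2>y_2-y_1\ge a^{x_1-S}$. The fatal step is the upper bound on $y_2$. In Proposition 3 the letter $c$ denotes the right-hand side of the Pillai-type equation (5.1) --- the paper's own proof immediately renames it $r$ and uses the hypothesis as $a^{x_1}>r/2$. Solutions of $a^x-b^y=r$ are in no way bounded by $r$: indeed $b^{y}=a^{x}-r$ can be arbitrarily large for fixed $r$, so your inequality $b^{y_2}<c$ is false, and the ``self-referential'' inequality $a^{t}<1+\tfrac{a}{2}+t\log a/\log 2$ built on it is unfounded. (A sanity check: for $a=3$ that inequality would force $t<2$, far stronger than the claimed $k\approx 7.46$ --- a sign that the elementary route is giving more than can be true.) The only size information available is $r<2a^{x_1}$, i.e.\ $\log r<(S+k_1)\log a+\log 2$, which bounds $r$ from above in terms of $x_1$, not $y_2$.

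Obtaining the upper bound on $y_2$ is precisely where the genuine (non-elementary) work lies, and it is the source of the constants $8.1$ and $1.19408$. The paper follows Bennett's Proposition 4.4: set $G=\max\{x_2/\log b,\,y_2/\log a\}$ and $\Lambda=|x_2\log a-y_2\log b|$, and invoke Mignotte's lower bound for linear forms in two logarithms, which gives the dichotomy $\log G\le 8.1$ or $\log\Lambda>-24.2(\log G+2.4)^2\log a\log b$; in the second case the smallness of $\Lambda$ (controlled by $r$) together with the lower bound $y_2\log b/\log r>10.519$ coming from Proposition 1 and Proposition 2 yields $G<29.8815(\log G+2.4)^2$, hence again $\log G\le 8.1$. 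Then $e^{8.1}\ge G\ge y_2/\log a>a^{k_1}/\log a$ contradicts $k_1\ge k$, and this is exactly why $k=(8.1+\log\log a)/\log a$. Your closing parenthetical gestures at the right issue but treats it as an optional alternative and attributes it to Bennett's hypergeometric method; it is neither optional nor hypergeometric --- it is a transcendence estimate, and without it the proposition does not follow. You would also need the paper's separate disposal of $(a,b)=(3,2)$ and $(5,2)$ (required both by Proposition 2 and by the choice of Mignotte's parameters), of the case $y_1=y_2$, and of $r=1$ (where $\log r$ must be replaced).
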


\begin{proof} 
When $(a,b) = (3,2)$, all cases in which (5.1) has more than one solution are given in \cite{Pi} and the Corollary to Theorem 2 of \cite{ScSt1}; when $(a,b)=(5,2)$, the elementary methods of \cite{Pi} along with the Corollary to Theorem 2 of \cite{ScSt1} suffice to give all cases in which (5.1) has more than one solution.  The proposition holds in all these cases, so we assume from here on that $(a,b) \ne (3,2)$ or $(5,2)$.  

Following closely the method of proof in Bennett's Proposition 4.4 \cite{Be}, assume there are two solutions to (5.1) with $a^{x_1} > r/2$, $y_2 \ge y_1$, and $x_2 \ge x_1 = S +k_1$ with $k_1 \ge k$, where $k$ is defined for each $a$ as in the formulation of this proposition.  
If $y_1 = y_2$, then, using equation (6) of \cite{ScSt2} with the roles of $a$ and $b$ reversed, we see that $x_1 = 1$; so we can take $y_1 < y_2$.  
From the equation 
$$a^{x_1} ( a^{x_2 - x_1} \pm 1 ) =  b^{y_1} ( b^{y_2 - y_1} \pm 1)$$
it follows that 
$$b^{y_2 - y_1} \congruent \pm 1 \bmod a^{x_1}$$   
and so Proposition 1 implies that $y_2 - y_1 \ge a^{x_1 - S}$.  Thus, 
$$y_2 > a^{k_1}. \eqno{(5.3)}$$
On the other hand, $r < 2 a^{x_1}$, so 
$$\log(r) < x_1 \log(a) + \log(2) = (S+ k_1) \log(a) +\log(2).$$
So now we have 
$$ \frac{ y_2  \log(b)}{ \log(r)} > \frac{ a^{k_1} \log(b) }{ (S+k_1) \log(a) + \log(2)} .$$
From Proposition 2 we have 
$$S < \frac{a \log(b)}{ 2 \log(a)} $$
and so 
$$ \frac{y_2 \log(b)}{ \log(r)} > \frac{ a^{k_1} }{ \left( \frac{ a }{ 2 \log(a)} + \frac{ k_1 }{ \log(b) } \right) \log(a) + \frac{\log(2)}{ \log(b)} } > 10.519 $$
where the second inequality follows from $k_1 \ge k$, $a \ge 3$, and $b \ge 2$.  Let 
$$G = \max \left\{ \frac{x_2}{ \log(b)} , \frac{y_2}{ \log(a)} \right\}.$$
Then we have 
$$ \frac{ G }{ 5.2595} \ge \frac{y_2 }{ 5.2595 \log(a)} >  \frac{ 2 \log(r) }{ \log(a) \log(b)} \eqno{(5.4)}.$$
Now let $\Lambda = | x_2 \log(a) - y_2 \log(b)|$.  Applying a theorem of Mignotte as given in Section 3 of \cite{Be}, and using in Mignotte's formula the parameters chosen by Bennett in the proof of Proposition 4.4 of \cite{Be} (recall $(a,b) \ne (3,2)$ or $(5,2)$), we see that we must have either
$$\log(G) \le 8.1 \eqno{(5.5)}$$
or
$$\log(\Lambda) > - 24.2 \left( \log(G) + 2.4 \right)^2 \log(a) \log(b). \eqno{(5.6)}$$
First assume $r>1$.  Assume (5.6) holds.  Then, in the same way we derived equation (11) in \cite{ScSt2} in the proof of Theorem 2 in \cite{ScSt2} (here $c_1 = r$),  we obtain 
$$G < 2  \frac{ \log(r)}{ \log(a) \log(b) } + 24.2 ( \log(G) + 2.4 )^2. \eqno{(5.7)}$$
Using (5.4) we obtain 
$$G < 29.8815 (\log(G) + 2.4)^2,$$
which implies $\log(G) < 8.1$.  So, no matter which of (5.5) or (5.6) holds, we have from (5.3) 
$$e^{8.1} \ge G \ge  \frac{y_2 }{ \log(a)} >  \frac{a^{k_1} }{ \log(a)},$$
which is impossible since $k_1 \ge k$.  

Now assume $r=1$, so that $\Lambda < \log(2)$.  Proceeding as with $r>1$, it is easily seen we can replace (5.7) by 
$$G <  \frac{ \log(2) }{ \log(a) \log(b) } + 24.2 ( \log(G) + 2.4 )^2. \eqno{(5.8)}$$
From (5.8) we again derive 
$$ \frac{a^{k_1} }{ \log(a)} < e^{8.1},$$
impossible since $k_1 \ge k$.  
\end{proof}

We are now ready to prove

\begin{Lemma}  
If $a$, $b$ and $r$ are coprime positive integers with $a$, $b \ge 2$ and
$r > b^{2a \log(a)}$ (or, when $a$ is prime and $(a,b,r) \ne (2,3,13)$, we can take $r > b^a$),
then the equation $a^x - b^y = r$ has at most one solution in positive integers $x$ and $y$.
\end{Lemma}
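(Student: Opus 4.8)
The plan is to argue by contradiction: assume $a^x - b^y = r$ has two distinct solutions and show that $r$ must then be bounded above, contradicting the hypothesis. First I would fix the ordering. If $(x_1,y_1)$ and $(x_2,y_2)$ are two solutions, then $b^{y_i} = a^{x_i} - r$ is strictly increasing in $x_i$, so $x_1 < x_2$ forces $y_1 < y_2$ (and $x_1 = x_2$ forces the same solution). Thus I may assume $x_1 < x_2$, $y_1 < y_2$; this is exactly equation (5.1) with $u = 0$, $v = 1$. The crucial observation is that $a^{x_1} = r + b^{y_1} > r$, so the hypothesis $a^{x_1} > r/2$ of Proposition 3 is automatically satisfied (with $c = r$).

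For the general bound when $a$ is composite (hence $a \ge 4$, so $a > 2$ and $(a,b) \ne (3,2)$), Proposition 3 yields $x_1 < S + k$. Since $b^{y_1} \ge 2$ gives $r < a^{x_1}$, and since Proposition 2 gives $a^S < b^{a/2}$, I obtain $r < a^{S+k} < b^{a/2} a^k$. Substituting $a^k = e^{8.1}\log(a)$ (for $a < 5346$) or $a^{1.19408}$ (otherwise), it remains to verify $b^{a/2} a^k \le b^{2a\log a}$, i.e. $a^k \le b^{\,a(2\log a - 1/2)}$, for every composite $a \ge 4$ and coprime $b \ge 2$. The right side dominates the slowly growing left side; the binding cases are the smallest admissible pairs $(a,b) = (4,3)$ and $(9,2)$, where the inequality already holds, and it only improves as $a$ or $b$ increases. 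This forces $r < b^{2a\log a}$, contradicting the hypothesis, so there is at most one solution.

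For prime $a$ the same scheme applies, but the constant $k$ coming from Mignotte's estimate is far too large to push the threshold down from $b^{2a\log a}$ to $b^a$: here $a^S = p^{g_1} \mid b^{n_1} \pm 1$ is already of size roughly $b^{n_1}$ with $n_1$ the minimal period, so what is needed is a bound of the form $x_1 \le S + O(1)$ with a small absolute constant, rather than $x_1 < S + k$. With such a bound one gets $r < a^{x_1} \le a^{S}\,a^{O(1)} < b^a$ once $a^S$ is controlled by $b^{n_1}+1$, except in the single tight configuration $(a,b,r) = (2,3,13)$ (realized by $2^4 - 3 = 2^8 - 3^5 = 13$). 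The general bound for prime $a$ then follows for free: since $b^{2a\log a} \ge b^a$, the hypothesis $r > b^{2a\log a}$ implies $r > b^a$, and the triple $(2,3,13)$ has $r = 13 < 3^{4\log 2}$ and so never meets the general hypothesis.

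The main obstacle is the \emph{prime refinement}. The composite case is essentially mechanical once Propositions 1--3 are in hand, but reaching the sharper threshold $b^a$ requires replacing the transcendence constant $k$ by a genuinely small bound on $x_1 - S$, which is where the prime-specialized linear-forms-in-logarithms analysis (with parameters tuned as in the Scott--Styer treatment of $a^x - b^y = c$) must be invoked; it is exactly the tightness of this step that produces the lone exception $(2,3,13)$. A secondary point is the clean verification of the boundary inequality $a^k \le b^{\,a(2\log a - 1/2)}$ at the smallest composite pairs, which I would confirm by the explicit estimates above.
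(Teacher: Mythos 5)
Your composite-$a$ argument is sound and is essentially the paper's own computation: the chain $r<a^{x_1}<a^{S+k}<b^{a/2}a^{k}$ against $r>b^{2a\log a}$ reduces to exactly the inequality the paper extracts from (5.9) and (5.10), namely $k<a\log(b)\left(2-\frac{1}{2\log(a)}\right)$, and your check at the extremal pairs is the right kind of verification. The genuine gap is the prime case. You route \emph{all} prime $a$ (including the general threshold $b^{2a\log a}$) through the sharper threshold $b^{a}$, and then you concede that this sharper bound --- a bound of the form $x_1\le S+O(1)$ with a small absolute constant --- is ``the main obstacle'' that ``must be invoked'' from a prime-specialized analysis you do not carry out or precisely cite. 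As written, nothing in your argument establishes the lemma for any prime $a$: for $a=2$ Propositions 2 and 3 do not even apply ($a>2$ is required), for $a=3$ the inequality $k\le a\log(b)\left(2-\frac{1}{2\log(a)}\right)$ actually fails (e.g.\ $(a,b)=(3,4)$ gives $7.46\not\le 6.42$), and for primes $a\ge 7$ you have simply declined to use the machinery that works.

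The paper avoids this trap in two ways you should note. First, Propositions 1--3 are not restricted to composite $a$; the paper applies them uniformly to all $a\ge 6$, prime or not, checking that the lower bound (5.9) is minimized at $(a,b)=(7,2)$ (giving $x_1-S>8.4$) while the upper bound (5.10) is maximized at $a=6$ (giving $x_1-S<4.9$), so the general threshold $b^{2a\log a}$ needs no prime refinement at all for $a\ge 6$. Second, the leftover cases $a\le 5$ (which include the problematic primes $2$, $3$, $5$) are disposed of by citing the known complete lists of double solutions of $a^{x}-b^{y}=r$ for those bases (Scott for $a=2,4$; Bennett for $a=3,5$), all of which satisfy $r<b^{2a\log a}$. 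Finally, the parenthetical sharper threshold $r>b^{a}$ for prime $a$ is not proved in the paper either --- it \emph{is} Bennett's Theorem 1.3, cited as such, with the exception $(2,3,13)$ arising only because Bennett's theorem statement omits $a=2$ (handled in his Proposition 2.1). So to repair your proof you should (i) drop the restriction to composite $a$ in the Proposition-based argument and instead restrict to $a\ge 6$, (ii) supply the citations for $a\le 5$, and (iii) replace your sketch of the prime refinement with the citation to Bennett.
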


\begin{proof}
Let $a$, $b$, and $r$ be coprime positive integers with $a, b \ge 2$ and $r > b^{2a \log(a)}$, and assume that the equation $a^x - b^y = r$ has two positive integer solutions $(x_1, y_1)$ and $(x_2,y_2)$ with $x_1 < x_2$.  Since $a^{x_1}>r$ we have 
$$ x_1 > 2 a \log(b). $$
Let $S$ be as in Proposition 1.  Proposition 2 gives $S < a \log(b) / (2 \log(a))$, so that
$$ x_1 - S > \left( 2 - \frac{1}{2 \log(a)} \right)  a  \log(b). \eqno{(5.9)}$$
On the other hand, Proposition 3 gives 
$$ x_1 - S < \max\left( \frac{8.1 + \log\log(a)}{\log(a)} , 1.19408 \right). \eqno{(5.10)} $$
We can assume $a \ge 6$, since the only cases of more than one solution to the equation $a^x-b^y=r$ for $a \le 5$ have been shown to be those given in Conjecture 1, all of which satisfy $r< b^{2a \log(a)}$ ($a=2$ or $4$ is handled in \cite{Sc}; $a=3$ or $5$ is handled in \cite{Be}).  So, taking $a \ge 6$, we see that the right side of (5.9) is minimal for $(a,b)=(7,2)$ and the right side of (5.10) is maximal for $a=6$.  Using these values of $a$ and $b$ we find from (5.9) that $x_1 - S > 8.4$, while from (5.10) we find that $x_1-S < 4.9$.  This contradiction proves the lemma for all $(a,b,r)$ with $r > b^{2 a \log(a)}$.  

Proving the lemma for prime $a$ with $r > b^a$ is already done in \cite{Be}, except that the exceptional case $(a,b,r)=(2,3,13)$ is not mentioned in the statement of Theorem 1.3 of \cite{Be}, since the case $a=2$ has already been handled in Proposition 2.1 of \cite{Be}.   
\end{proof}

\begin{proof}[Proof of Theorem 5]
Suppose $a^x - b^y = r$ has a solution $(x,y)$.  Bennett \cite{Be} uses a result of Mignotte on linear forms in logarithms to obtain an inequality from which  \cite{ScSt2} derives the relation  
$$  \frac{x}{\log(b)} < 2 \frac{\log(r)}{\log(a) \log(b)} + 22.997 \left( \log\left( \frac{x}{\log(b)} \right) + 2.405 \right)^2.$$
By Lemma 6 we can replace $r$ by $b^{2a \log(a)}$.  Thus, 
$$  \frac{x}{\log(b)} <  4 a + 22.997 \left( \log\left( \frac{x}{\log(b)} \right) + 2.405 \right)^2. \eqno{(5.11)} $$ 
If $a^x - b^y = r$ has two solutions $a^{x_1} - b^{y_1} = a^{x_2} - b^{y_2} = r >0$ with $x_2 > x_1 \ge 1$, then from (5.11) we can get a bound on $x_2$ in terms of $a$ and $b$.  For instance, if $a$, $b \le 3600$, then $x_2 < 1194836$.  

For a given $a$, $b < 3600$, we use a technique often called \lq bootstrapping' to show that $x_2>1194836$ and hence $a^x - b^y = r$ cannot have two solutions.  Whenever this technique shows that $x_2>1194836$ then we can conclude that there cannot be a second solution for that $a$, $b$.  See \cite{GLS} for an early application of bootstrapping.  

We rearrange $a^{x_1} - b^{y_1} = a^{x_2} - b^{y_2}$ to obtain the equation
$$ a^{x_1} (a^{x_2-x_1} - 1) = b^{y_1} (b^{y_2-y_1} - 1). \eqno{(5.12)}$$ 

We first find lower bounds for $x_1$ and $y_1$.  Bennett \cite{Be} has shown that if $r \le 100$ then there are no double solutions other than the known exceptions listed in Theorem 3.  So we may assume $r \ge 101$.  Thus, $a^{x_1} - b \ge a^{x_1} - b^{y_1} = r \ge 101$ so $x_1 \ge \lceil\log(b+101)/\log(a)\rceil$.  We can also consider the powers of 2 dividing each side of (5.2).  Suppose $a$ is even and $b$ is odd; let $2^\alpha \parallel a$ and $2^\beta \parallel b-1$.  Then $x_1 \ge \lceil\beta/\alpha\rceil$. Suppose $a$ is odd and $b$ is even; let $2^\alpha \parallel a-1$ and $2^\beta \parallel b$.  Then $y_1 \ge \lceil\alpha/\beta\rceil$.  (Similar bounds can be derived using primes other than 2, which helped in a couple of instances.) If $a$ is even and $b$ is odd, We set $X_1 = \max(\lceil\log(b+101)/\log(a)\rceil,\lceil\beta/\alpha\rceil)$ and $Y_1=1$; if $a$ is odd and $b$ is even, $X_1 = \lceil\log(b+101)/\log(a)\rceil$ and $Y_1 = \lceil\alpha/\beta\rceil$.  We have $x_1 \ge X_1 \ge 1$ and $y_1 \ge Y_1 \ge 1$.

The goal of bootstrapping is to use factors on the left side of (5.12) to get a lower bound on $y_2-y_1$, then we use factors on the right side to get a lower bound on $x_2-x_1$, then use the new factors on the left side to get a better lower bound on $y_2-y_1$, etc., alternating until we achieve the desired lower bound on $x_2-x_1$.      

Fix a bound $B$ (in our case $B = 1194836$).  We will use bootstrapping to find a value $d_x \mid x_2-x_1$ with $d_x>B$.    

Define the multiplicative order $M(m,j) = k$ to mean that $k$ is the least positive integer such that $j^k \equiv 1 \bmod m$.  Recall we have $x_1 \ge X_1$ and $y_1 \ge Y_1$.  Let $d_x = M(b^{Y_1}, a) \mid x_2-x_1$ and let $d_y = M(a^{X_1},b) \mid y_2-y_1$.  

For positive integers $u$ and $v$, define $R(u,v) = r$ where $r$ is the maximal positive integer such that $u = rs$ with $\gcd(r,v)=1$.  Since $a^{X_1} (a^{d_x} -1)$ divides the left side of (5.12), we see that $d_y' = M( a^{X_1} R(a^{d_x} -1, b), b) \mid y_2-y_1$.  Similarly, since $b^{Y_1} (b^{d_y} -1)$ divides the right side of (5.12), we see that $d_x' = M(b^{Y_1} R(b^{d_y} -1,a)), a) \mid x_2-x_1$.  Now we alternate finding new factors on each side and use these factors to find lower bounds on the exponents of the opposite side, until we get $x_2-x_1 > B$.   

A potential difficulty arises: calculating $M(m,j)$ requires explicitly factoring $m$ into primes (and explicitly factoring $p-1$ for each of these prime factors, though this was never a problem). In general the computer cannot fully factor $a^{X_1} (a^{d_x} -1)$ or $b^{Y_1} (b^{d_y} -1)$.  But often we can find some factors $f \mid a^{X_1} (a^{d_x} -1)$ for which we can calculate $d_y' = M(f, b) \mid y_2-y_1$.  Similarly, we often can find some factors $f \mid b^{Y_1} (b^{d_y} -1)$ for which we can calculate $d_x' = M(f,a) \mid x_2-x_1$.  As long as $\lcm(d_x', d_x) > d_x$ or $\lcm(d_y', d_y) > d_y$, we are making progress and can continue the bootstrap.  If we cannot find any suitable factors $f$ which increase the lower bounds on $x_2-x_1$ or $y_2-y_1$, then the bootstrapping fails.  If the bootstrapping succeeds in finding an iterate $d_x \mid x_2-x_1$ with $d_x > B$, then we can conclude that $x_2 >  x_2-x_1 \ge d_x > B = 1194836$ so (5.12) has no solution, hence $a^{x} - b^{y} = r$ cannot have two solutions.  

In one case, using $x_1 \ge X_1$ and $y_1 \ge Y_1$ did not result in bootstrapping success.  To handle such cases, we find $X_0 > X_1$ or $Y_0>Y_1$ which allows bootstrapping to work.  First consider $X_0>X_1$; assume that with this $X_0>X_1$ and $Y_0 = Y_1$ we can use bootstrapping to show that $d_x>B$.  The successful bootstrapping shows that (5.12) has no solutions for pairs $(x_1,y_1)$ for which $x_1 \ge X_0$.  Similarly we consider $Y_0>Y_1$ and assume that with $X_0 = X_1$ and $Y_0 > Y_1$ that we can use bootstrapping to show that $d_x>B$ so that (5.12) has no solutions for pairs $(x_1,y_1)$ with $y_1 \ge Y_0$.  We need to deal with the remaining finite number of pairs $(x_1, y_1)$ with $X_0 > x_1 \ge X_1$ and $Y_0 > y_1 \ge Y_1$.  Fix any such pair $(x_1, y_1)$ and consider any value $x_2$ with $B \ge x_2 > x_1$.  For the given $(x_1, y_1, x_2)$ we solve (5.12) for $y_2$ and determine if $y_2$ is integral or not.  If $y_2$ is integral, we have found a double solution to $a^x-b^y=r$, otherwise (5.12) does not have an integral solution for this $(x_1, y_1, x_2)$.  Thus, a finite number of calculations finds all double solutions if any exists.   
\end{proof}

We give the one example for which we needed to take $X_0 > X_1$ and $Y_0>Y_1$.  Let $a=2661 = 3 \cdot 887$ and $b=20 = 2^2 \cdot 5$.  Then $M(b,a) = 1$ and $M(a,b) = 886$.  Now $a^1 -1 = 2^2 \cdot 5 \cdot 7 \cdot 19$ so $X_1 = 1$, and $b^{886} - 1 = 3 \cdot 7 \cdot 19 \cdot 887 \cdot C_{1148}$ so $Y_1 = 1$ (where $C_{1148}$ is a 1148 digit composite that I was unable to factor).  Now $R(b(b^{886}-1),a)= b \cdot 7 \cdot 19 \cdot C_{1148}$ so the only useful factor we find is $f= 7 \cdot 19 b$.  But $M(f,a)=1$. Similarly, $f = R(a(a-1),b) = a \cdot 7 \cdot 19$ and $M(f,b) = 886$ so again we make no progress; our bootstrapping has failed.  

So we now assume $x_1 \ge X_0 = 2$. Then $M(a^2,b) = 2357646 = 2 \cdot 3 \cdot 443 \cdot 887$.  We use the Maple\textsuperscript{TM} command ifactors/easy to find some medium-sized factors $ 209983 \cdot 688423 \mid b^{2 \cdot 3 \cdot 443} -1$.  Now $M(209983 \cdot 688423, a) = 2589778>B$.  So (5.12) has no solutions if $x_1 \ge X_0 = 2$.  Now assume $y_1 \ge Y_0 =2$.  Now $M(b^2,a) = 20$ and $11^3 \mid a^{20} -1$ and $M(11^3, b) = 605$.  We again use Maple to find factors $150041 \cdot 2209901 \mid b^{605}-1$; $M(150041 \cdot 2209901, a) = 753575900 > 1194836$ so (5.12) has no solutions if $y_1 \ge Y_0 = 2$.  

Thus, the only remaining case we need to consider is $(x_1, y_1) = (1,1)$. For each value of $x_2$ with $1194836 \ge x_2 > x_1 = 1$, we can show that this instance of (5.12)
$$ a^1 (a^{x_2-1} -1) = b^1 (b^{y_2-1} - 1)$$
has no integral solution $y_2$.   Thus, we conclude that (5.12) has no solutions when $a=2661$ and $b=20$.  

In practice, the standard bootstrapping technique can almost always achieve any desired lower bound on $x_2-x_1$, even when $B$ is as large as $10^{100}$; the extended idea to use $X_0>X_1$ and $Y_0 > Y_1$ allows the bootstrapping to succeed in the rare cases where the initial bootstrapping fails, but this extended idea requires $B$ small enough to do calculations for each $x_2 \le B$, so we are fortunate in our case that our bound $B$ is small (probably bounds up to $10^{10}$ would be doable).   
 
We give a further example of bootstrapping relevant to Section 4.  Let $(a,b) = (10,3)$ and rewrite $10^{x_1} + 3^{y_2} = 10^{x_2} + 3^{y_1}$ (with $x_1<x_2$) as $10^{x_1} (10^{x_2-x_1} - 1) = 3^{y_1} (3^{y_2-y_1} -1)$. First, $M(10,3)=4$ and $3^4 -1 = 2^4 \cdot 5$, and $M(3,10)=1$ and $10^1-1=3^2$. So $x_1 \ge 4 = X_0$ and $y_1 \ge 2 = Y_0$. Now $M(10^4,3) = 500$, so $500 \mid y_2-y_1$.  We use the Maple to find some medium-sized factors of $3^{500} - 1$, for instance, the prime $61070817601$. Note that $M(61070817601, 10) = 15267704400 = 2^4 \cdot 5^2 \cdot 3^2 \cdot 13 \cdot 139 \cdot 2347$.  Any divisor of 15267704400 divides $x_2-x_1$. We choose the divisor $9 \cdot 13$, and consider factors of $3^{9 \cdot 13} -1$; we find that $1846794457 \mid 3^{9 \cdot 13} -1$.  Now $M(1846794457,10) = 205199384$ so $205199384 \mid x_2-x_1$, showing that $x_2>205199384>1194836$.  We can continue (essentially reversing the roles of $a$ and $b$) by noting that $2^3 \cdot 13 \mid 205199384$, that $1846794457 \mid 3^{2^3 \cdot 13} -1$, and that $M(1846794457,3) = 205199384> 1194836$, hence $y_2>y_2-y_1>1194836$.  Thus, when either $(a,b) = (10,3)$ or $(3,10)$, there cannot be a second solution to (1.1).

\section*{Acknowledgments} 

This work used the Augie High Performance Computing cluster, funded by award NSF 2018933, at Villanova University, as well as Sage \cite{Sage}. The bootstrapping computations were performed using Maple\textsuperscript{TM}.   We are grateful to Aaron Wemhoff at Villanova and to William Stein at SageMath for their helpful assistance.  We are also grateful to Takafumi Miyazaki for many helpful comments, to Jeffrey Goodwin for helpful proof-readings, and to Reese Scott for many helpful suggestions and for providing the proofs of Theorem 2, Lemma 5, Propositions 1--3, and Lemma 6.

\end{document}